\newtheorem{Thm}{Theorem}[section]
\theoremstyle{plain}
\newtheorem{Theorem}[Thm]{Theorem}
\newtheorem{Lemma}[Thm]{Lemma}
\newtheorem{Corollary}[Thm]{Corollary}
\newtheorem{Proposition}[Thm]{Proposition}
\theoremstyle{definition}
\newtheorem{Definition}[Thm]{Definition}
\newtheorem{Example}[Thm]{Example}
\theoremstyle{remark}
\newtheorem{Remark}{Remark}
\font\ym=msbm10
\newcommand{\Z}{\text{\ym Z}}
\newcommand{\R}{\text{\ym R}}
\newcommand{\C}{\text{\ym C}}
\newcommand{\sfrac}[2]{\mbox{\small $\dfrac{#1}{#2}$}}
\begin{document}
\title[Shifting Semicircular Roots]{On Shifting Semicircular Roots}
\author{Shigeru Yamagami}
\address{Graduate School of Mathematics, Nagoya University}
\urladdr{https://www.math.nagoya-u.ac.jp/~yamagami/}
\author{Hiroaki Yoshida}
\address{Department of Information Sciences, Ochanomizu University}
% \date{2022/8/24}
%\email{second@math.sc.edu}
%\urladdr{www.math.sc.edu/$\sim$second}
\begin{abstract}
  In the framework of continued fraction expansions of Stieltjes transforms, we consider shifting of semicircular laws.
  The continuous part of the associated measure admits a density function which is the quotient of semicircular one by a polynomial.
  We study how this polynomial denominator determines shifted semicircular laws,
  with explicit descriptions in examples of shifting up to the level of step two. 
\end{abstract}

\maketitle

% \subjclass{Primary 46L54; Secondary 60E10}

\section*{Introduction}
In recent developments in free probability theory, various operations on probability distributions are effectively utilized to
organize relevant probability laws in a perspective way.
Among them the most basic one is Wigner's semicircular law and its deformed versions are found a lot
in connection with suitable models.
One series of examples belonging to this category is obtained by the first author during studies of the spectral properties of
Haagerup's positive definite functions on free groups, which turns out to be a distinctive form of modified semicircular law.

We here continue considerations of Haagerup type distributions from the view point of the associated continued fraction expansion.
Based on somewhat lengthy but routine computations, the Cauchy-Stieltjes transforms of the distrubutions in question are shown 
to have continued fraction expansions which are just deformations of the semicircular one up to the level two
(or equivalently it is a two-step shift of semicircular roots). 
Similar phenomena are observed in other interesting defomrmations, which made us to think seriously about its mechanism.

More explicitly, we assume that our Stieltjs transforms take the form $(F(w) + \sqrt{w^2 - 4c})/G(w)$ with $F$ and $G$ polynomials of
a complex variable $w$. In the case of Haagerup type distributions, $F$ and $G$ are polynomials of degree $2$ and $3$ respectively.
If these really come from probability distributions, the Stiltjes inversion formula reveals that, except for finitely many point masses,
the continuum part of the distribution is given by a density function proportional to $\sqrt{4c-t^2}/G(t)$ ($-2\sqrt{c} < t < 2\sqrt{c}$).

Here arises our basic question:
To what extent does the density function determine measures whose Stieltjes transforms take the form of repeated shifts of semicircular roots.

We first observe that, though on a formal algebraic level,
the roots of $G$ determines the polynomial $F$ as well as the proportional constant of $G$. 

The observation is then explicitly worked out in one-step shifting of semicircular roots.
Two-step shifting is also checked when $G$ is divided by $w^2 - \zeta^2$ with $\zeta$ a complex constant.

The positivity of Stieltjes transforms in these classes are also described, inclusing Haagerup type distributions
as a special case.

In the last section, relations to several known operations on free probability distributions are discussed. 

The authors are grateful to M.~Nagisa and M.~Uchiyama for fruitful discussions on the subject on various occasions.

\section{Generalities}
Given a complex measure $\mu$ in $\R$, its Stieltjes transform $S_\mu(w)$ is a holomorphic function of $w \in \C \setminus \R$ defined by
\[
  S_\mu(w) = \int_\R \frac{1}{t-w}\, \mu(dt), 
\]
which contains full information of $\mu$ because it restores $\mu$ by the inversion formula
\[
  2\pi i \mu(dt) = \lim_{\epsilon \to +0} \Bigl(S_\mu(t+i\epsilon) - S_\mu(t-i\epsilon)\Bigr)\, dt. 
\]
Here convergence on the right hand side is in the weak* sense in the dual Banach space $C_0(\R)^*$
{($C_0(\R)$ being the Banach space of
complex-valued continuous functions on $\R$ vanishing at infinity with the uniform norm)}.
Since $\overline{S_\mu(w)} = S_{\overline{\mu}}(\overline{w})$, $\mu$ is real (or a signed measure)
if and only if $S = S_\mu$ is \textbf{real} in the sense that $\overline{S(w)} = S(\overline{w})$.

The Stieltjes transform $S$ of a probability measure $\mu$ 
% in turn restores the probability measure $\mu$ by 
% \[
%   2\pi i \mu(dt) = \lim_{\epsilon \to +0} \Bigl(S_\mu(t+i\epsilon) - S_\mu(t-i\epsilon)\Bigr)\, dt
% \]
% and
is then characterized as a real holomorphic function $S$ on $\C \setminus \R$ satisfying 
% (i) $\overline{S(w)} = S(\overline{w})$,
\begin{enumerate}
  \item (positivity)
    $\text{Im}(S(w)) > 0$ ($\text{Im}\, w > 0$) and
  \item (normalization)
    $\displaystyle \lim_{y \to +\infty} y\, S(iy) = i$.
  \end{enumerate}

When $\mu$ is a probability measure for which polynomial functions are integrable, its Stieltjes transform
% \[
%   S_\mu(w) = \int_\R \frac{1}{t-w}\, \mu(dt)
% \]
is known to be expressed by a continued fraction
 \[
\cfrac{1}
{a_0-w + \cfrac{-b_0^2}
{a_1-w + \cfrac{-b_1^2}
{a_2-w + \ddots}
}}
= \cfrac{-1}
{w-a_0 - \cfrac{b_0^2}
{w-a_1- \cfrac{b_1^2}
{w-a_2- \ddots}
}},  
\]
where a sequence $a = (a_n)_{n \geq 0}$ of reals and
a sequence $b = (b_n)_{n \geq 0}$ of strictly positive reals (referred to as Jacobi parameters) are
coefficients of recurrence relation among orthogonal polynomials associated to
$\mu$ and constitute a diagonal and an off-diagonal parts of the so-called Jacobi matrix $J_{a,b}$, 
which is real-symmetric and calculated via moment sequences in such a way that they are in one-to-one correspondence.
  
It is also known that the following conditions are equivalent: %\cite{Ak} ?
\begin{enumerate}
\item
  $J_{a,b}$ is bounded as an operator on $\ell^2$.
\item
  $a= (a_n)$ and $b = (b_n)$ are bounded sequences.
\item
  $\mu$ is supported by a bounded subset of $\R$.
\item
  The Stieltjes transform $S_\mu(w)$ is analytic at $w = \infty$. 
\end{enumerate}

Moreover, if $E(dt)$ is the (projection-valued) spectral measure of $J_{a,b}$ on $\R$, then $\mu(dt) = (e_0|E(dt)e_0)$ and hence 
\[
  S_\mu(w) = \Bigl(e_0\Bigl| \frac{1}{J_{a,b} - wI}\Bigr.e_0\Bigr). 
\]
Here $e_0 = (1,0,\cdots)$ is a unit vector in $\ell^2$ and $I$ denotes the identity operator. 
% $J_{a,b}$, $\mu$ and $S_=mu$ are related as follows:

Notice that, if this is the case, the Stieltjes transform $S(w)$ which is analytic at $w=\infty$ inductively determines
coefficients $a_0,a_1,\dots$ and $b_0,b_1,\dots$ as follows:

From the asymptotic behavior of $S(w)$ at $w=\infty$, we have
\[
  \frac{1}{S(w)} = - w + a_0 + O(1/w)
\]
and then, in view of $w - a_0 + 1/S(w) = O(1/w)$,  
\[
  \frac{1}{w - a_0 + 1/S(w)} = \alpha + \beta w + O(1/w).
\]
If $\beta \not= 0$, letting $b_0 = 1/\sqrt{\beta}$ and $a_1 = - \alpha/\beta$, an analytic function
\[
  S_1(w) = -\beta(w-a_0 + \frac{1}{S}) = -\beta\frac{(w-a_0)S + 1}{S}
\]
behaves like $-1/w + O(1/w^2)$ at $w = \infty$ and we can repeat the procedure inductively to get $a_n$, $b_n$ and $S_n(w)$ so that
\[
  \frac{1}{S_{n-1}(w)} + w - a_{n-1} = -b_{n-1}^2 S_n(w),
  \quad
  \frac{1}{S_n(w)} = a_n - w + O(1/w) 
  \]
  as long as $b_n \not= 0$.

  \begin{Example}
    Let
    \[
      S(w) = \frac{1}{b} \frac{b-w + \sqrt{w^2-4c}}{a-w}.
    \]
    Then
    \[
      \frac{1}{S} = - w + \frac{ab - 2c}{b} + O(1/w)
    \]
    and
    \[
      \frac{1}{w - (ab-2c)/b + 1/S} = \frac{b^2}{2c(ab-2c)} w + \frac{b(b^2-2ab + 4c)}{2(ab-2c)^2} + O(1/w), 
    \]
    whence
    \[
      b_0 = \frac{\sqrt{2c(ab-2c)}}{b},
        \quad
        a_1 = - \frac{(b^2 - 2ab + 4c)c}{(ab - 2c)b}. 
      \]
  \end{Example}

  In terms of sequences $(a_n)$ and $(b_n)$ obtained in this way from $S(w)$,
  the reality condition $\overline{S(w)} = S(\overline{w})$ is equivalent to $a_n, b_n^2 \in \R$ ($n \geq 0$).
  A real $S$ is then said to be \textbf{positive} if $b_n^2 > 0$ ($n \geq 0$).
  Given a positive $S$ with $(a_n)$ and $(b_n)$ bounded, we have a finite measure $\mu$ such that its support $[\mu]$ is bounded and
  Jacobi parameters are given by $(a_n)$ and $(b_n)$.
  The Stieltjes transform $S_\mu(w)$ of $\mu$ is then given by $S(w)$ near $w=\infty$ and hence $S_\mu = S$ globally.
  
  In particular $S$ is holomorphic on $\overline{\C} \setminus [\mu]$ and the measure $\mu$ of bounded support is restored from $S$
  by the Stieltjes inversion formula. 

\bigskip
Given a probability measure $\mu$ in $\R$, its affine transform $\mu_{a,b}$ ($a \in \R$, $0 \not=b \in \R$) defined by
\[
  \int_\R f(t)\, \mu_{a,b}(dt) = \int_\R f(bt + a)\, \mu(dt)
\]
satisfies
\[
  S_{\mu_{a,b}}(w) = \int_\R \frac{1}{t-w}\, \mu_{a,b}(dt)
  = \int_\R \frac{1}{bt+a - w}\, \mu(dt)
  = \frac{1}{b} S_\mu((w-a)/b)
\]
with its continued fraction given by
 \[
\cfrac{1}
{ba_0 + a -w + \cfrac{-b^2b_0^2}
{ba_1 + a -w + \cfrac{-b^2b_1^2}
{ba_2 + a -w + \ddots}
}}. 
\]
Thus $(a_n)$ and $(b_n)$ are changed to $(ba_n +a)$ and  $(|b|b_n)$. 

\begin{Example}
$\mu$ is symmetric, i.e., invariant under $t \leftrightarrow -t$, if and only if $a_j = 0$ ($j \geq 0$). 
\end{Example}

% Shifted Jacobi matrices and secondary polynomials $Q_n$ are in order here. 

\section{Semicircular Roots}
Given a positive real $c > 0$ and a complex parameter $w$,
consider a quadratic equation $z^2 + wz + c = 0$ of $z$ with its solutions denoted by
  \begin{align*}
    \rho &= \frac{-w + \sqrt{w^2 - 4c}}{2} = -\frac{c}{w} + O(1/w^2),\\
    \rho^* &= \frac{-w - \sqrt{w^2 - 4c}}{2} = -w + \frac{c}{w} + O(1/w^2). 
  \end{align*}
  Here the square root $\sqrt{z^2 - 4c}$ of $z^2 - 4c$ is an analytic function of $z \in \overline{\C} \setminus [-2\sqrt{c},2\sqrt{c}]$ 
  with its root branch specified by  $\sqrt{z^2-4c} = z(1-2c/z^2) + O(1/z^2)$ for a large $z \in \C$ and the boundary value on $\R$ given by 
  \[
    \sqrt{(x\pm i0)^2 - 4c}
    =
    \begin{cases}
      \sqrt{x^2 - 4c} &(x> 2\sqrt{c})\\
      \pm i\sqrt{4c - x^2} &(-2\sqrt{c} < x < 2\sqrt{c})\\
      - \sqrt{x^2-4c} &(x < - 2\sqrt{c})
      \end{cases}. 
    \]

    \begin{Remark}
      $-\rho$ and $-\rho^*$ for $c=1$ are actually inverse maps of the Joukowski transform in aerodynamics.
      % called an elementary conformal mapping in Szeg\"o 1.9.
\end{Remark}

    The quadratic root $\rho$ (called \textbf{semicircular root}) then maps the upper (lower) half plane onto an upper (lower) semidisk
    so that
    the line segment $x\pm i0$ ($-2\sqrt{c} \leq x \leq 2\sqrt{c})$ gives a parametric 
    upper (lower) semicircle $(-x \pm i\sqrt{4c-x^2})/2$ of radius $\sqrt{c}$ 
    and $\rho(\pm (2\sqrt{c},\infty]) = \pm(0,\sqrt{c}]$ respectively.
    In total, $\rho$ maps $\overline{\C} \setminus I_c$ ($I_c = [-2\sqrt{c},2\sqrt{c}]$) onto an open disk $[|z|^2 < c]$ bijectively.
    Note that the imaginary half line $i(0,\infty) = \{iy; y>0\}$ is mapped onto an imaginary segment $i(0,\sqrt{c})$. 

    Likewise, the conjugate root $\rho^*$ maps $\C \setminus I_c$ onto $[|z|^2 > c]$ bijectively so that
    $x \pm i0$ ($x \in I_c^\circ = (-2\sqrt{c},2\sqrt{c})$) constitutes the lower or upper semicircle. 
    
   The semicircular root is also expressed by a continued fraction of constant type: 
\[
S_c(w) =  \cfrac{1}
{-w + \cfrac{-c}
{-w + \cfrac{-c}
{-w + \ddots}
}}.
\]
In fact, $z = c S_c(w)$ satisfies %$\zeta(w) = -r(1-r) C_r(w)$ satisfies
\[
  -z = \frac{-c}{-w - z} \iff z^2 + wz + c = 0 
\]
% \[
%   \zeta(w) = \frac{-b}{-w + \zeta(w)} \iff \zeta^2 - w\zeta + b = 0 
% \]
with an asymptotic behavior $z = -c/w + O(1/w^2)$,  
whence it is identified with $\rho$, i.e., $S_c = \rho/c$.
% determined to be 
%  \[
%    \zeta(w) = \frac{w - \sqrt{w^2 - 4r(1-r)}}{2}
%  \]
%  with 
%  \[
%    \sqrt{w^2 - 4r(1-r)} = w - \frac{2r(1-r)}{w} + O(1/w^3).
%  \]

Note that the Stieltjes inversion formula shows that $S_c$ for $c> 0$ is the Stieltjes transform of a semicircle distribution of
density
\[
  \frac{1}{2\pi c} \sqrt{4c - t^2} \quad
  (-2\sqrt{c} \leq t \leq 2\sqrt{c}). 
\]

%\color{teal}
For later use, we shall give more examples of continued fraction expansions of Stieltjes transforms. 
% \section{Spectral measures of Haagerup functions}
 Given $0< r < 1$ and a complex parameter $\lambda \not= 0$, consider an analytic function of $w$, 
 % \[
 %   C(w) =  \frac{\lambda - \lambda^{-1}}{2}
 %         \frac{D_1(w)}{(1-w^2)(r\lambda + (1-r)\lambda^{-1} - w)}
 % \]
 % with
 % \[
 % D_1(w) = 2(\lambda - \lambda^{-1})^{-1}(1-w^2) + (2r-1)w + \sqrt{w^2 - 4r(1-r)} 
 % \]
 % and
 \[
   S(w) =  \frac{\lambda - \lambda^{-1}}{2}
         \frac{N(w)}{(1-w^2)(r\lambda + (1-r)\lambda^{-1} - w)}
 \]
 with 
 \[      
   N(w) = 2(\lambda - \lambda^{-1})^{-1}(1-w^2) + (2r-1)w + \sqrt{w^2 - 4r(1-r)}, 
 \]
 which is found in connection with Haagerup's positive definite functions on free groups (\cite{Ya}).

 A bit lengthy but simple computations lead us to the expression 
 \begin{align*}
   S(w) % &= \cfrac{1}{\lambda^{-1} - w + \cfrac{-r(1-\lambda^{-2})}{-r(1-\lambda^{-2})\cfrac{D_2}{D_1}}}\\
        &= \cfrac{1}{\lambda^{-1} - w + \cfrac{-r(1-\lambda^{-2})}{-r\lambda^{-1} - w - r(1-r) S_{r(1-r)}}}\\
   &= \cfrac{1}{\lambda^{-1} - w + \cfrac{-r(1-\lambda^{-2})}{-r\lambda^{-1} - w + \cfrac{-r(1-r)}{-w + \cfrac{-r(1-r)}{-w+\ddots}}}}. 
 \end{align*}
%\color{black}

 \section{Shifting Semicircular Roots}
 % Let $C$ be a shift of the semicircular root $\rho$ which lowers $\rho$ one-step:
 % \[
 %     C = \frac{1}{a-w - b^2\rho} 
 %    = \frac{a-w - b^2\rho^*}{(a-w - b^2\rho)(a-w - b^2\rho^*)} 
 %  \]
  
 %  \begin{align*}
 %    C &= \frac{1}{a-w - b^2\rho} = \frac{1}{a-w + b^2\zeta}\\
 %    &= \frac{a-w + b^2\zeta'}{(a-w + b^2\zeta)(a-w + b^2z')} 
 %  \end{align*}

 %  % \section{Continued Fractions}

 In the context of continued fractions, we define the \textbf{shift} of a function $S(w)$ of a complex variable $w$ by
 \[
   \frac{1}{\alpha - w - \beta S(w)} =
   \begin{pmatrix}
     0 & 1\\
     -\beta & \alpha - w
   \end{pmatrix}. S(w).
 \]
 Here $\alpha$ and $\beta \not= 0$ are complex parameters and a linear fractional transform
 $\displaystyle \frac{az+b}{a'z + b'}$ of $z$ is denoted by
 $\begin{pmatrix}
   a & b\\
   a' & b'
 \end{pmatrix}.z
 $.
 Notice that, if $S$ is the Stieltjes transform of a probability measure on $\R$, so is the shift of $S$ exactly when
 $\alpha \in\R$ and $\beta>0$.
 % In view of $S(w) = O(1/w)$, functions $1$, $w$ and $1/w$ are linearly independent, which is used to see that
 % the reality of the shift and $S$ imply $\alpha, \beta \in \R$.
 % Moreover, if both the shift and $S$ are positive, we have $\Im w + \beta \Im S(w) > 0$ ($\Im w > 0$).
 % If $S$ is not a real constant, we can find a sequence $w_n$ in the upper half plane converging to a real number such that $\lim \Im S(w_n) > 0$
 % and hence $\beta>0$. 

 If $S$ is expressed by a continued fraction like
 \[
   S(w) = \cfrac{1}
{\alpha_0-w + \cfrac{-\beta_0}
{\alpha_{-1} - w + \cfrac{-\beta_{-1}}
{\alpha_{-2} -w + \ddots}
}}, 
 \]
 the shift of $S$ makes the continued fraction one-step longer or higher.

 For $S = 2\rho$ with $\rho$ the semicircular root, one-step shift of $2\rho$ is given by 
 \[
 S_{\alpha,\beta}(w) 
= \frac{1}{\alpha-w - 2\beta\rho}
% = \begin{pmatrix}
%       0 & 1\\
%       -\beta & \alpha - w
%     \end{pmatrix}.(2\rho)
    = \cfrac{1}
{\alpha-w + \cfrac{-2\beta c}
{-w + \cfrac{-c}
{-w + \ddots}
}}. 
 \]
% \begin{align*}
% S_{\alpha,\beta}(w) 
% &= \cfrac{1}
% {\alpha-w + \cfrac{2\beta c}
% {-w + \cfrac{-c}
% {-w + \ddots}
% }}\\
%   &= \frac{1}{\alpha-w + 2\beta\rho}
%        = \frac{\alpha - (1+\beta)w - \beta\sqrt{w^2 - 4c}}{\alpha^2 + 4\beta^2c - 2\alpha(1+\beta)w + (1+2\beta)w^2}.
% \end{align*}
% Here $\beta \not= 0$ and, if we write the last expression
% in a form $(f_1(w) + h_1(w) \sqrt{w^2-4c})/g_1(w)$ with $f_1$, $g_1 \not= 0$ and $h_1 \not= 0$ polynomials
% satisfying $\deg h_ 1 = 0$, then $\deg f_1 = 1$ and $\deg g_1 = 2$ except for $\beta=-1/2$ or $\beta = -1$,
% which is assumed for a while.
%
%  In terms of a linear fractional transformation, we have an expression
% \[
% \begin{pmatrix}
%       0 & 1\\
%       -\beta & \alpha - w
%     \end{pmatrix}.(2\rho) 
% \]
% for $S_{\alpha,\beta}$, whence

Higher shifts are then obtained by repeating the shift operation:
Given sequences $(\alpha_n)_{n \geq 1}$ and $(\beta_n)_{n \geq 1}$ with $\beta_n \not= 0$, starting from
\[
  \begin{pmatrix}
    A_1 & B_1\\
    C_1 & D_1
  \end{pmatrix}
  =
  \begin{pmatrix}
    0 & 1\\
    -\beta_1 & \alpha_1 - w
  \end{pmatrix}, 
  % =
  %  \begin{pmatrix}
  %   0 & 1\\
  %   \beta & \alpha - w
  % \end{pmatrix}, 
\]
% \[
%   \begin{pmatrix}
%     A_0 & B_0\\
%     C_0 & D_0
%   \end{pmatrix}
%   =
%   \begin{pmatrix}
%     0 & 1\\
%     \beta & \alpha - w
%   \end{pmatrix}, 
  % =
  %  \begin{pmatrix}
  %   0 & 1\\
  %   \beta & \alpha - w
  % \end{pmatrix}, 
%\]
introduce sequences $A_n$, $B_n$, $C_n$ and $D_n$ of polynomials of $w$ by a recurrence relation 
\[
  \begin{pmatrix}
    A_n & B_n\\
    C_n & D_n
  \end{pmatrix}
  =
  \begin{pmatrix}
    0 & 1\\
    -\beta_n & \alpha_n - w
  \end{pmatrix}
  \begin{pmatrix}
    A_{n-1} & B_{n-1}\\
    C_{n-1} & D_{n-1}
  \end{pmatrix} 
\] 
for $n \geq 2$ so that the $(n+1)$-th shift of $2\rho$ is given by
\begin{align*}
    \begin{pmatrix}
    A_n & B_n\\
    C_n & D_n
  \end{pmatrix}.S_{\alpha,\beta}
   &= \begin{pmatrix}
    A_n & B_n\\
    C_n & D_n
  \end{pmatrix}
    \begin{pmatrix}
    0 & 1\\
    -\beta & \alpha - w
  \end{pmatrix}.(2\rho)\\
            &= \begin{pmatrix}
                -\beta B_n & A_n + (\alpha - w)B_n\\
                -\beta D_n & C_n + (\alpha - w) D_n
              \end{pmatrix}.(2\rho)\\
        &= \frac{-2\beta B_n\rho(w) + A_n + (\alpha - w)B_n}{-2\beta D_n\rho(w) + C_n + (\alpha - w)D_n}\\
   % &= \frac{A_n + (\alpha - (1+\beta)w) B_n + \beta B_n \sqrt{w^2-4c}}{C_n + (\alpha - (1+\beta)w)D_n + \beta D_n \sqrt{w^2-4c}}\\
   %      &= \frac{(A_n + \alpha B_n - (1+\beta)w B_n + \beta B_n \sqrt{w^2-4c}) (C_n + \alpha D_n - (1+\beta)wD_n - \beta D_n \sqrt{w^2-4c})}
   %        {(C_n + \alpha D_n - (1+\beta)wD_n)^2 - \beta^2 D_n^2 (w^2-4c)}\\
   &= \frac{F_n + \beta(A_nD_n-B_nC_n)\sqrt{w^2-4c}}{G_n},  
\end{align*}
where 
\begin{align*}
  F_n &= (A_n+\alpha B_n)(C_n + \alpha D_n) + 4c\beta^2 B_n D_n\\
  &\qquad- (1-\beta)w(A_nD_n+ B_nC_n + 2\alpha B_nD_n) + (1-2\beta) w^2 B_n D_n,\\
  G_n &= (C_n + \alpha D_n)^2 + 4c\beta^2 D_n^2 - 2(1-\beta)w (C_n + \alpha D_n)D_n + (1-2\beta) w^2 D_n^2. 
\end{align*}

Note that $A_nD_n - B_nC_n = \beta_1\cdots \beta_n \not= 0$ is a constant as a product of determinants. 

% \begin{align*}
%         &= \frac{2A_n(w)\rho(w) + B_n(w)}{2C_n(w)\rho(w) + D_n(w)}\\
%         &= \frac{B_n - wA_n + A_n \sqrt{w^2-4c}}{D_n - wC_n + C_n \sqrt{w^2-4c}}\\
%         &= \frac{(B_n - wA_n + A_n \sqrt{w^2-4c})(D_n - wC_n - C_n \sqrt{w^2-4c})}{(D_n - wC_n)^2 - C_n^2 (w^2-4c)}\\
%   &= \frac{F_n + (A_nD_n-B_nC_n)\sqrt{w^2-4c}}{G_n},  
% \end{align*}
% where
% \begin{align*}
%   F_n &= (B_n - wA_n)(D_n - wC_n) -A_nC_n(w^2-4c)\\
%   &= B_nD_n + 4c A_nC_n -(A_nD_n + B_nC_n)w,\\
%   G_n &= (D_n - wC_n)^2 - C_n^2 (w^2-4c)\\
%   &= 4c C_n^2 + D_n^2 - 2C_nD_nw. 
% \end{align*}

If we set $A_0 = D_0 = 1$ and $B_0 = D_0 = 0$ in the above formula to define $F_0$ and $G_0$, then 
\[
    F_0 = \alpha - (1-\beta)w,\quad
    G_0= \alpha^2 + 4c\beta^2 - 2\alpha(1-\beta)w + (1-2\beta)w^2
    % A_0D_0 - B_0C_0 &= -\beta \not= 0.
  \]
  and 
  \[
  S_{\alpha,\beta}(w)
  = \frac{F_0(w) + \beta\sqrt{w^2 - 4c}}{G_0(w)}. 
\]

From this expression, one sees that %It turns out that
the initial shifting is rather special in connection with $2\rho$:
  $(\deg F_0,\deg G_0)$ is $(1,2)$ if $(1-\beta)(1-2\beta) \not= 0$,
  whereas   $(0,2)$ for $\beta=1$ and $(1,1)$ for $\beta = 1/2$ but $\alpha \not= 0$. 
  Note that, when $\beta = 1/2$ and $\alpha = 0$, shifting is reduced to repetition:
  $S_{0,1/2} = \rho/c$. 

As for shifting matrices, it is immediate to see that
\[
    \begin{pmatrix}
    \deg A_n & \deg B_n\\
    \deg C_n & \deg D_n
  \end{pmatrix}
  =
  \begin{pmatrix}
    n-2 & n-1\\
    n-1 & n
  \end{pmatrix}, 
\]
which is used, together with $A_n = C_{n-1}$, $B_n = D_{n-1}$ and 
\[
  C_n = (-1)^{n} \beta_1 w^{n-1} + \cdots,
  \quad
  D_n = (-1)^n w^n + \cdots, 
\]
to see
\begin{align*}
  F_n &= \Bigl((1-2\beta)w^2 - 2\alpha(1-\beta)w\Bigr) D_{n-1} D_n + (\text{terms lower than $w^{2n}$}),\\
  G_n &= \Bigl((1-2\beta)w^2 - 2\alpha(1-\beta)w\Bigr) D_n^2 + (\text{terms lower than $w^{2n+1}$}) 
\end{align*}
and then for $n \geq 1$
\begin{align*}
  \deg F_n &= 2n+1,\quad \deg G_n = 2n+2\quad (\beta \not= 1/2), \\
  \deg F_n &= 2n, \quad \deg G_n = 2n+1\quad (\beta = 1/2).
\end{align*}

% Thus,
% \[
%   F_n = - (1+2\beta)w^{2n+1} + \cdots,
%   \quad
%   G_n = (1+2\beta) w^{2n+2} + \cdots
%   \]
% % \begin{align*}
% %    F_n &= D_{n-1}D_n + 4cC_{n-1}C_n - (C_{n-1}D_n + D_{n-1}C_n)w = -(1+2\beta) w^{2n+1} + \cdots,\\
% %    G_n &= (1+2\beta) w^{2n+2} + \cdots, 
% %  \end{align*}
% show that $\deg F_n = 2n+1$ and $\deg G_n = 2n + 2$ if $\beta \not= -1/2$.

% To obtain the degrees for $\beta = -1/2$, we use
% \[
%   S_{\alpha,-1/2} =
% \]

% Returning to the initial shift, we choose
% \begin{align*}
%   f_1(w) &= \alpha - (1+\beta)w,\\
%   g_1(w) &= \alpha^2 + 4\beta^2 c - 2\alpha(1+\beta)w + (1+2\beta)w^2,\\
%   h_1(w) &= -\beta \not= 0
% \end{align*}
% so that
% \[
%   S_{\alpha,\beta}(w) = \frac{1}{\alpha - w + 2\beta \rho} = \frac{f_1(w) + h_1(w) \sqrt{w^2-4c}}{g_1(w)}. 
% \]

Now we observe how the roots of the denominator $G_n$ ($n \geq 1$) determines $(n+1)$-th shift parameters
on a level of algebraic freedom.

First let $\deg G_n = 2n+2$. Then the monic part of $G_n$ is specified by $2n+2$ parameters, whereas we have the same number of
shift parameters, i.e., $\alpha,\alpha_1,\cdots,\alpha_n$ and $\beta,\beta_1,\cdots,\beta_n$. 
Thus generically shift parameters are algebraically determined from the information on roots of $G_n$,
which in turn describe $A_n$, $B_n$, $C_n$ and $D_n$.

Next assume $\deg G_n = 2n+1$. Then the monic part of $G_n$ is specified by $2n+1$ parameters, whereas shift parameters are reduced
to $\alpha,\alpha_1,\cdots,\alpha_n$ and $\beta_1,\cdots,\beta_n$ in view of $\beta = 1/2$.
Thus again shift parameters are algebraically determined from the information on roots of $G_n$.

In either case, we can freely choose the monic part of $G_n$ but other parameters including the leading term of $G_n$
as well as $F_n$ and $A_nD_n - B_nC_n$ are determined algebraically.

We shall now look into the above observation for $n=0$ (one-shift) and $n=1$ (two-shift) more closely.

\medskip
\noindent
\section{Algebraic Solutions in Lower Shifts}

\subsection{One-Shift}

% For later use, we see how the denominator $g_1$ controls other terms.
First assume that $\beta\not= 1/2$ and $G_0$ is proportional to $(w-a)(w-b)$. %$w^2 - 2\xi w + \eta$.
Then
\[
  \frac{a+b}{2} = \frac{\alpha(1-\beta)}{1-2\beta}, \quad
  ab = \frac{\alpha^2 + 4c\beta^2}{1-2\beta}. 
\]
% \[
%   \xi = \frac{\alpha(1+\beta)}{1+2\beta}, \quad
%   \eta = \frac{\alpha^2 + 4c\beta^2}{1+2\beta}. 
% \]
In terms of a new parameter $\sigma = (1-\beta)/\beta \iff \beta = 1/(\sigma+1)$ ($\beta \not= 1/2 \iff \sigma \not= 1$),
these take the form
\[
  \frac{a+b}{2} = \frac{\sigma}{\sigma-1} \alpha,\quad
  ab = \frac{\alpha^2(\sigma+1)^2 +4c}{\sigma^2-1}, 
\]
which induces an equation on $\sigma$ of the form 
\[
  (a-b)^2 \sigma^4 - 2(a^2 + b^2 - 8c) \sigma^2 + (a+b)^2 = 0. 
\]

For $a+b \not= 0$, any solution of this equation satisfies $\sigma \not= 0$ and determines $\alpha$ and $\beta$ by
\[
  \alpha = \frac{(a+b)(\sigma-1)}{2\sigma}, \quad
  \beta = \frac{1}{\sigma + 1}.
\]
When $a+b = 0$, the equations on $\alpha$ and $\beta$ (or $\sigma$) are reduced to
\[
  \alpha(1-\beta) = 0, \quad
  -a^2 = \frac{\alpha^2 + 4c\beta^2}{1-2\beta}
\]
with solutions given by
\[
  \alpha = 0,\ \beta = \frac{a^2 \pm a \sqrt{a^2-4c}}{4c}
  \quad
  \text{or}
  \quad 
  \alpha = \pm \sqrt{a^2-4c},\ \beta = 1
\]
in such a way that these share a solution if and only if $a^2 = 4c$.

Returning to the equation on $\sigma^2$, it is quadratic if $a \not= b$ and we have 
% Thus, in the case of $\beta = 1$, $a+b = 0$ and one sees that $\alpha = \pm \sqrt{a^2 - 4c}$.
% % $\xi = 0$ necessarily and $\alpha = \pm \sqrt{-\eta - 4c}$.
% Otherwise ($(1-\beta)(1-2\beta) \not= 0$), given $a$ and $b$, % $\xi$ and $\eta$, 
% $\sigma = (1-\beta)/\beta$ satisfies the equation\footnote{Use $1-2\beta = (1-\beta)^2 - \beta^2$.}
% \[
%   (a-b)^2 \sigma^4 - 2(a^2 + b^2 - 8c) \sigma^2 + (a+b)^2 = 0
% \]
% with solutions given by 
\begin{align*}
  \sigma^2 &= \frac{a^2+b^2 - 8c \pm \sqrt{(a^2-4c + b^2 - 4c)^2 - (a^2 - 4c + 4c - b^2)^2}}{(a-b)^2}\\
           &= \frac{a^2+b^2 - 8c \pm 2\sqrt{(a^2-4c)(b^2 - 4c)}}{(a-b)^2}\\
           &= \frac{(\sqrt{a^2-4c} \pm \sqrt{b^2-4c})^2}{(a-b)^2}. % \quad (a \not= b), 
\end{align*}
When $a=b$, the equation is degenerate to
$(a^2-4c)\sigma^2 = a^2$, which has solutions if and only if $a^2 \not= 4c$ in view of $c \not= 0$ with solutions given by
\[
  \sigma = \pm \frac{a}{\sqrt{a^2-4c}}.
\]

% which is degenerate to 
% \[
%   \sigma^2 = \frac{a^2}{a^2 - 4c}
%   \quad
%   (a = b \not= \pm 2\sqrt{c}).
% \]
% (The case $a=b=\pm 2\sqrt{c}$ is not realized as an $S_{\alpha,\beta}$.)

Consequently, for $a+b \not= 0$ and $a \not= b$, we have four combinations %(including degenerate solutions) 
\[
  \sigma =\pm \frac{\sqrt{a^2-4c} \pm \sqrt{b^2-4c}}{a-b}, 
\]
as solutions of $\sigma$, which give rise to four solutions of shift parameters 
\[
  \alpha = \frac{(a+b)(\sigma-1)}{2\sigma},
  \quad
  \beta = \frac{1}{\sigma + 1}
\]
under the proportionality of $G_0$ and $(w-a)(w-b)$,  % $w^2 - 2\xi w + \eta$
the proportionality constant $1-2\beta$ as well as $F_0$ being determined then.
% \[
%   (4c-2\xi^2 + \eta)^2 - 4\xi^2(\xi^2-\eta)
%   = (\eta^2 + 4c)^2 - 16c\xi^2. 
% \]

Next let $\beta = 1/2$ and $\alpha \not= 0$. Then $F_0 = \alpha - w/2$ and
$G_0 = c + \alpha^2 - \alpha w$. Thus the monic part of $G_0$ or its root $\alpha + c/\alpha$ determines $\alpha$ and then $F_0$.

As a summary, we have the following. 

\begin{Proposition}\label{one-shift}
  One-shift condition on $(F_0 + \beta\sqrt{w^2-4c})/G_0$ determines the multiplying constant $1-2\beta$ as well as $F_0$ from the roots of $G_0$:
  \begin{enumerate}
    \item
      If $\deg G_0 = 2$ and $G_0$ is proportional to $(w-a)(w-b)$ ($a \not= b$), 
      % $w^2 - 2\xi w + \eta$ ($\xi \not= 0$ and $\eta \not= \xi^2$),
      then we have (possibly degenerate) four choices of $(\alpha,\beta)$ with the limit solutions for $a+b = 0$ expressed by 
      \[
        (\alpha,\beta) = (\pm \sqrt{a^2 -4c},1)\ \text{or}\ (0,a/(a \pm \sqrt{a^2-4c})) 
      \]
      including the degenerate unique solution $(\alpha,\beta) = (0,1)$ for $b = -a = \pm 2\sqrt{c}$. 
      % then the quadratic equation
      % of $s = (1+\beta)^2/\beta^2$ has non-zero (possibly degenerate) solutions and each solution gives rise to two $\beta$'s.
    \item
      If $\deg G_0 = 2$ and $G_0$ is proportional to $(w-a)^2$, % ($a \not= 0$ and $a^2 \not= 4c$), 
      % $w^2 - 2\xi w + \xi^2 = (w-\xi)^2$ ($\xi \not= 0$ and $\xi^2 \not= 4c$),
      % then $(1-\beta)^2/\beta^2 = a^2/(a^2-4c)$ gives two $\beta$'s.
      then
      \[
        \alpha = a \pm \sqrt{a^2-4c}, \quad \beta = \frac{4c-a^2 \mp a\sqrt{a^2-4c}}{4c}
        \]
      (we have two solustions for $a^2 \not= 4c$, whereas $\beta = 0$ for $a^2 = 4c$). 
    % \item
    %   If $\deg G_0 = 2$ and $G_0$ is proportional to $(w-a)^2$ ($a^2 = 4c$), % $w^2 - 2\xi w + \xi^2$ ($\xi^2 = 4c$),
    %   then the one-shift condition is not satisfied. 
    % \item
    %   If $\deg G_0 = 2$ and $G_0$ is proportional to $w^2 - a^2$, then % $w^2 + \eta$, then
    %   $(\alpha,\beta)$ is either $(\pm \sqrt{a^2 -4c},1)$ or $(0,(a^2 \pm a\sqrt{a^2-4c})/4c)$.
    %   (The latter belongs to the case (i) including the case $\sigma = 0$ as a limit.)
    %   % $(\pm \sqrt{-\eta -4c},-1)$ or $(0,(\eta \pm \sqrt{\eta^2-4c\eta})/4c)$.  
    \item
      If $\deg G_0 = 1$, then $\beta = 1/2$ and
      $\alpha + c/\alpha$ is the root of $G_0$, which determines $\alpha$ and then $F_0$ up to an exchange $\alpha \leftrightarrow c/\alpha$. 
    \end{enumerate}
  \end{Proposition}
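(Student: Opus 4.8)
The plan is to verify the three cases directly from the explicit formulas
\[
  F_0 = \alpha - (1-\beta)w,\qquad
  G_0 = \alpha^2 + 4c\beta^2 - 2\alpha(1-\beta)w + (1-2\beta)w^2
\]
derived above, treating the ``one-shift condition'' simply as the requirement that $(F_0+\beta\sqrt{w^2-4c})/G_0$ coincides with $S_{\alpha,\beta}(w)$ for some $\alpha\in\C$ and $0\neq\beta\in\C$. So the task is really to invert, in each degree regime, the polynomial identity $G_0 \propto (\text{prescribed monic polynomial})$ for the pair $(\alpha,\beta)$, and then check that $1-2\beta$ and $F_0$ are thereby pinned down.

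First I would dispose of case (iii): here $\deg G_0 = 1$ forces the $w^2$-coefficient to vanish, i.e. $\beta = 1/2$, so $F_0 = \alpha - w/2$ and $G_0 = c+\alpha^2 - \alpha w$; the single root of $G_0$ is $\alpha + c/\alpha$, and since $c\neq 0$ the equation $\alpha + c/\alpha = (\text{root})$ is a quadratic in $\alpha$ with the two roots related by $\alpha\leftrightarrow c/\alpha$ — this is just Vieta. For cases (i) and (ii), $\beta\neq 1/2$ and I match $G_0/(1-2\beta)$ with $(w-a)(w-b)$, which gives exactly the two displayed relations
\[
  \frac{a+b}{2} = \frac{\alpha(1-\beta)}{1-2\beta},\qquad ab = \frac{\alpha^2+4c\beta^2}{1-2\beta}.
\]
Introducing $\sigma = (1-\beta)/\beta$ converts this system, after eliminating $\alpha$, into the quartic $(a-b)^2\sigma^4 - 2(a^2+b^2-8c)\sigma^2 + (a+b)^2 = 0$; solving the quadratic in $\sigma^2$ and using the algebraic identity $(a^2-4c+b^2-4c)^2 - (a^2-b^2)^2 = 4(a^2-4c)(b^2-4c)$ yields $\sigma^2 = (\sqrt{a^2-4c}\pm\sqrt{b^2-4c})^2/(a-b)^2$, hence the four values $\sigma = \pm(\sqrt{a^2-4c}\pm\sqrt{b^2-4c})/(a-b)$ when $a\neq b$ and $a+b\neq 0$, and correspondingly $\alpha = (a+b)(\sigma-1)/(2\sigma)$, $\beta = 1/(\sigma+1)$; this is case (i). For case (ii), $a=b$ collapses the quartic to $(a^2-4c)\sigma^2 = a^2$, solvable iff $a^2\neq 4c$ with $\sigma = \pm a/\sqrt{a^2-4c}$, from which $\alpha = a\pm\sqrt{a^2-4c}$ and $\beta = (4c-a^2\mp a\sqrt{a^2-4c})/(4c)$ follow by direct substitution, and $a^2 = 4c$ degenerates to $\beta = 0$.

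The remaining points are the degenerate boundary behaviour asserted in (i). When $a+b=0$ the relation $\alpha(1-\beta)/(1-2\beta) = 0$ forces either $\alpha = 0$ or $\beta = 1$; feeding each back into $ab = -a^2 = (\alpha^2+4c\beta^2)/(1-2\beta)$ gives $\beta = (a^2\pm a\sqrt{a^2-4c})/(4c)$ in the first branch and $\alpha = \pm\sqrt{a^2-4c}$, $\beta = 1$ in the second, and one checks these branches meet precisely when $a^2 = 4c$, in which case both reduce to $(\alpha,\beta) = (0,1)$. I expect the only mildly delicate point to be bookkeeping the degeneracies consistently — namely confirming that the ``four'' solutions of (i) genuinely specialize to the listed limit solutions as $a+b\to 0$ (the factor $\sigma$ in the denominator of $\alpha$ must be controlled, since $\sigma\to\pm 1$ along two of the four branches, making $\alpha\to 0$, while along the other two $|\sigma|\to\infty$, making $\beta\to 0$ or giving the $\beta=1$ solution). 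Once $(\alpha,\beta)$ is fixed in each case, the multiplying constant is read off as $1-2\beta$ and $F_0 = \alpha-(1-\beta)w$ is immediate, completing the proof.
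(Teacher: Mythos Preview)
Your proof is correct and follows essentially the same route as the paper: the same substitution $\sigma=(1-\beta)/\beta$, the same quartic in $\sigma$, the same factorization of the discriminant via $(a^2-4c+b^2-4c)^2-(a^2-b^2)^2=4(a^2-4c)(b^2-4c)$, and the same case splits for $a=b$, $a+b=0$, and $\beta=1/2$. One small slip in your closing parenthetical: as $a+b\to 0$ the four branches actually send $\sigma$ to $0$ (twice, yielding $\beta\to 1$ and, after resolving a $0/0$, $\alpha\to\pm\sqrt{a^2-4c}$) and to $\pm\sqrt{a^2-4c}/a$ (yielding $\alpha\to 0$), not to $\pm 1$ or $\infty$ --- but this is only a consistency remark, and your direct handling of $a+b=0$ via $\alpha(1-\beta)=0$ is already complete and matches the paper exactly.
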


\begin{Remark}
The case $\alpha=0$ in (i) corresponds to the classical Kesten measure and the case (iv) covers the Marchenko-Pastur law. 
\end{Remark}

% \[
%   A(w) = \frac{(1+\beta)w - \alpha}{\beta},\quad
%   B(w) = \frac{\alpha^2 + 4\beta^2 c - 2\alpha(1+\beta)w + (1+2\beta)w^2}{-\beta}. 
% \]
% Assume that $B(w) = -(w^2 - 2\xi + \eta)(1+2\beta)/\beta$ 

\subsection{Two-Shift}

Now we shift $S_{\alpha,\beta}$ one-step further to get 
\[
  S(w) = \begin{pmatrix} 0 & 1\\ -\delta & \gamma - w\end{pmatrix}.S_{\alpha,\beta} = \frac{1}{\gamma - w - \delta S_{\alpha,\beta}(w)}
  = \frac{F + \beta\delta \sqrt{w^2-4c}}{G}, 
\]
where 
\begin{align*}
  F &= \alpha(-\delta + \alpha\gamma - \alpha w) + 4c\beta^2 (\gamma - w)\\
      &\qquad- (1-\beta) w (-\delta + 2\alpha(\gamma - w)) + (1-2\beta)w^2(\gamma - w)\\
    &= -(1-2\beta) w^3 + \Bigl(2\alpha(1-\beta) + (1-2\beta)\gamma\Bigr) w^2\\
    &\qquad- \Bigl(\alpha^2 + 4c\beta^2 + (1-\beta)(-\delta + 2\alpha\gamma)\Bigr) w
      + \alpha^2\gamma - \alpha\delta + 4c\beta^2\gamma
\end{align*}
and, with the parameter switched to $\Delta = -\delta + \alpha\gamma$ from $\delta$, 
\begin{align*}
  G &= (-\delta + \alpha\gamma -\alpha w)^2 + 4c\beta^2(\gamma - w)^2\\
  &\qquad- 2(1-\beta)w(-\delta + \alpha\gamma - \alpha w)(\gamma - w) + (1-2\beta)w^2(\gamma - w)^2\\
    &= (\Delta -\alpha w)^2 + 4c\beta^2(\gamma - w)^2 - 2(1-\beta)w(\Delta - \alpha w)(\gamma - w)\\
  &\qquad+ (1-2\beta)w^2(\gamma - w)^2\\
    &= (1-2\beta)w^4 - 2\Bigl(\alpha(1-\beta) + (1-2\beta)\gamma\Bigr)w^3\\
      &\qquad+ \Bigl(\alpha^2 + 4c\beta^2 + (1-2\beta)\gamma^2 + 2(1-\beta)(\Delta + \alpha\gamma)\Bigr) w^2\\
      &\qquad\qquad- 2\Bigl(4c\beta^2\gamma + \Delta(\alpha + (1-\beta)\gamma)\Bigr) w + \Delta^2 + 4c\beta^2\gamma^2. 
\end{align*}

When $\beta = 1/2$, these are simplified to
\begin{align*}
  F &= \alpha w^2 - (c+\alpha^2 +\alpha\gamma - \delta/2)w + \alpha(-\delta + \alpha\gamma) + c\gamma,\\
  G &= -\alpha w^3 + (c+\alpha^2 + \alpha\gamma + \Delta)w^2
      - (2c\gamma + \Delta (2\alpha + \gamma)) w + c\gamma^2 + \Delta^2. 
\end{align*}
Even in that case, there is no simple way to extract solutions from the general monic part of $G$ (or from the information on roots of $G$). 

\begin{Example}\label{degree4}
  Assume that $G$ is an even polynomial of degree $4$ ($\beta \not= 1/2$ particularly) satisfying $G(\pm \zeta) = 0$
  (more precisely, $G(w)$ being factorized through $w^2-\zeta^2$). Then
 % odd degree coefficients  parameters are restricted to 
  \[
    \alpha(1-\beta) + (1-2\beta)\gamma = 0 = 
    4c\beta^2\gamma + \Delta(\alpha + (1-\beta)\gamma), 
  \]
  which is equivalent to
  \[
    \gamma = - \frac{1-\beta}{1-2\beta} \alpha,
    \quad
    \alpha(\Delta + 4c(1-\beta)) = 0, 
  \]
  and
  \begin{multline*}
   % G(\pm \zeta) = 
    (1-2\beta)\zeta^4 + 4c\beta^2\zeta^2 + 2(1-\beta)\Delta\zeta^2 + \Delta^2\\
    + \alpha^2\zeta^2 + (1-2\beta)\gamma^2\zeta^2 + 2(1-\beta) \alpha\gamma\zeta^2 + 4c\beta^2 \gamma^2 = 0.
  \end{multline*}
  % \[
  %   1+2\beta + 4c\beta^2 + 2(1+\beta)\Delta + \Delta^2 + \alpha^2 + (1+2\beta)\gamma^2 + 2(1+\beta) \alpha\gamma + 4c\beta^2 \gamma^2 = 0.
  % \]
 \begin{enumerate} 
\item When $\alpha = 0$,
  we have $\gamma = 0$ and
  \[
    G = (1-2\beta)w^4 + 2(2c\beta^2 + (1-\beta)\Delta) w^2 + \Delta^2
  \]
  with the condition $G(\pm \zeta) = 0$ expressed by 
  \begin{gather*}
    0 = (1 - 2\beta)\zeta^4 + 4c\beta^2\zeta^2 + 2(1-\beta)\zeta^2\Delta + \Delta^2\\
    \iff \Delta = -(1 - \beta)\zeta^2 \pm \beta\zeta \sqrt{\zeta^2-4c},
  \end{gather*}
  which is used to have 
  \begin{align*}
    F &= -(1-2\beta) w^3 -(4c\beta^2 + (1-\beta)\Delta)w,\\
    G &= (1-2\beta)(w^4-\zeta^4) + 2(2c\beta^2 + (1-\beta)\Delta) (w^2-\zeta^2). 
  \end{align*}
  In this way, taking $\beta \not= 1/2$ as a free parameter, other parameters are algebraically expressed by $\beta$.
  
 \item  When $\alpha \not= 0$, $\Delta = -4c(1-\beta)$ together with $\gamma = -\alpha(1-\beta)/(1-2\beta)$
  is used in the condition $G(\pm \zeta) = 0$ to have
  \[
    \Bigl(\beta^2\zeta^2 + (4c-\zeta^2)(1-\beta)^2\Bigr) \Bigl((4c-\zeta^2)(1-2\beta)^2 + \alpha^2\beta^2\Bigr) = 0. 
  \]
  Thus, if 
  \[
    \beta^2\zeta^2 + (4c-\zeta^2)(1-\beta)^2 = 0 \iff \beta = -\frac{\zeta^2-4c \pm \zeta\sqrt{\zeta^2-4c}}{4c},  
  \]
  we can choose $\alpha$ as a free parameter so that $\gamma = -\alpha(1-\beta)/(1-2\beta)$ and 
  $\delta = -\Delta + \alpha\gamma$ with $\beta$ and $\Delta$ algebraically determined from $\zeta$.
  In terms of these, we have
  \begin{align*}
    F(w) &= -(1-2\beta)w^3 + \alpha(1-\beta)w^2\\
    &\qquad\qquad+ \left( \frac{\alpha^2\beta^2}{1-2\beta} + 4c(1-2\beta)\right) w
           - 4c\frac{(1-\beta)^3}{1-2\beta}\alpha,\\
    G(w) &= (1-2\beta) (w^4-\zeta^4)
           - \left( \frac{\alpha^2\beta^2}{1-2\beta} - 4c\beta^2 + 8c(1-\beta)^2\right) (w^2 - \zeta^2). 
  \end{align*}

 % \textcolor{teal}{For the positivity condition $\beta> 0$ and $\delta > 0$, we shall discuss it separately afterward.} 
  
  Otherwise, we take $\beta$ as a free parameter to have 
  \[
    \alpha = \pm\sqrt{\zeta^2-4c} \frac{1-2\beta}{\beta}, \quad
    \gamma = \mp\sqrt{\zeta^2-4c} \frac{1-\beta}{\beta}, \quad
    \Delta = -4c(1-\beta)
  \]
  and
  \begin{align*}
    F &= -(1-2\beta)w^3 \pm \sqrt{\zeta^2-4c} \frac{(1-\beta)(1-2\beta)}{\beta} w^2\\
        &\qquad\qquad\qquad+ (1-2\beta)\zeta^2 w \mp 4c \sqrt{\zeta^2 - 4c} \frac{(1-\beta)^3}{\beta},\\
    % F &= -(1+2\beta) w^3 \pm \sqrt{1-4c} \frac{(1+\beta)(1+2\beta)}{\beta}w^2 + (1+2\beta)w\\
    %   &\qquad\qquad+ 4c(1-4c) (1+\beta)^2 \mp 4c\sqrt{1-4c} \frac{(1+\beta)(1+2\beta)}{\beta},\\
  %  G &= (1+2\beta) w^4 - (1+2\beta + 4c(1+\beta)^2) w^2 + 4c(1+\beta)^2\\
  G  &= \Bigl((1-2\beta)w^2 - 4c(1-\beta)^2\Bigr) (w^2-\zeta^2). 
  \end{align*}
\end{enumerate}
\end{Example}

\begin{Example}\label{degree3}
  Let $G$ be of degree $3$ ($\beta = 1/2$, $\alpha \not= 0$) and assume that $G(\pm \zeta) = 0$ (more precisely,
  $G(w)$ is factored through $w^2-\zeta^2$): 
  \[
    \alpha\zeta^2 + 2c\gamma + \Delta(2\alpha + \gamma)
    = 0 = (c + \alpha^2 + \alpha\gamma + \Delta)\zeta^2 + c\gamma^2 + \Delta^2.
  \]
  We rewrite these relations as 
  \[
    \left(\alpha + \frac{\gamma}{2}\right) \left( \Delta + \frac{\zeta^2}{2}\right) = \left(\frac{\zeta^2}{4} - c\right) \gamma
  \]
 and  
   \[
    \zeta^2 \left( \alpha + \frac{\gamma}{2} \right)^2 + \left(\Delta + \frac{\zeta^2}{2}\right)^2
    = \left(\frac{\zeta^2}{4} - c\right) (\gamma^2 + \zeta^2). 
  \]
  
  To get algebraic solutions of parameters, we first deal with a somewhat exceptional case, namely 
  \[
    \left(\alpha + \frac{\gamma}{2}\right) \left( \Delta + \frac{\zeta^2}{2}\right) = 0
    \iff \left(\frac{\zeta^2}{4} - c\right) \gamma = 0
  \]
  in the first equation. 
  If $\gamma = 0$, we then have $\Delta + \frac{\zeta^2}{2} = 0$ in view of $\alpha \not = 0$ and
  the exisitence of non-zero $\delta = - \Delta + \alpha \gamma = \zeta^2/2$ requires $\zeta \not= 0$,
  which is used in the second equation to have $\alpha^2 = (\zeta^2 - 4c)/4$. Thus the equations are solved to be 
  \[
    \alpha = \pm \frac{1}{2}\sqrt{\zeta^2 - 4c},\quad \gamma = 0,\quad \delta = \frac{\zeta^2}{2}
  \]
  so that 
  \[
    G(w) = -\alpha\left(w - \frac{\zeta^2}{4\alpha}\right) (w^2-\zeta^2)
  \]
  % \[
  %   \alpha + \gamma + (c+\Delta)/\alpha = \alpha + (c-\zeta^2/2)/\alpha
  % = \frac{(\zeta^2-4c)/4 + c - \zeta^2/2}{\alpha} = -\frac{\zeta^2}{4\alpha}. 
  % \]

  Otherwise, $\zeta^2 = 4c \not= 0$ and the second equation is reduced to
  \[
      \zeta^2 \left( \alpha + \frac{\gamma}{2} \right)^2 + \left(\Delta + \frac{\zeta^2}{2}\right)^2 = 0, 
  \]
  which is combined with  $\left(\alpha + \frac{\gamma}{2}\right) \left( \Delta + \frac{\zeta^2}{2}\right) = 0$ to have 
  $\alpha + \frac{\gamma}{2} = 0 = \Delta + \frac{\zeta^2}{2}$ 
  and then
  $\delta = - \Delta + \alpha\gamma = 2(c-\alpha^2)$.
We are thus lead to a solution for $\zeta^2 = 4c$ having $\alpha \not= 0$ as a free parameter and 
  \[
    \gamma = -2\alpha, \quad \delta = 2(c-\alpha^2) 
  \]
  so that
  \[
    G(w) = -\alpha\left(w - \alpha - \frac{c}{\alpha} \right) (w^2 - 4c).
  \]

  We also separate one more exceptional case $\zeta = 0$;
  \[
    \left(\alpha + \frac{\gamma}{2}\right) \Delta = -c\gamma,
    \quad
    \Delta^2 = -c\gamma^2.
  \]
  Since the case $(\alpha + \gamma/2)\Delta = 0 \iff \gamma = 0$ is already covered above, let $\gamma \not= 0$
  (necessarily $\Delta \not = 0$). We then have
  \[
    \left(\alpha + \frac{\gamma}{2}\right) \gamma = \Delta \iff \delta = - \frac{1}{2} \gamma^2
  \]
  and the equations are reduced to
  $\displaystyle\left(\alpha + \frac{\gamma}{2}\right)^2 = -c$ so that one degree of free parameters remains and
  \[
     G(w) = -\alpha\left(w - \alpha - \frac{c}{\alpha} \right) w^2.
  \]
  % \[
  %   \alpha^2 + \alpha\gamma + c + \Delta
  %   = \left(\alpha + \frac{\gamma}{2}\right)^2 - \frac{\gamma^2}{4} + c + \left( \alpha + \frac{\gamma}{2}\right) \gamma
  %   = \alpha\gamma + \frac{\gamma^2}{4}
  %   = \left(\alpha + \frac{\gamma}{2}\right)^2 - \alpha^2 = -c - \alpha^2. 
  % \]
  
  We now assume that $\zeta(\zeta^2 - 4c)\gamma \not= 0$ and
  take the quotient of the second equation by the first one to get
% 
  % rewrite the equations further 
  % via a linear fractional change of variables for $\zeta \not= 0$. 
  % Since the second equation is expressed by
  % \[
  %   \zeta^2 \left( \alpha + \frac{\gamma}{2} \right)^2 + \left(\Delta + \frac{\zeta^2}{2}\right)^2
  %   = \left(\frac{\zeta^2}{4} - c\right) (\gamma^2 + \zeta^2) 
  % \]
  % and the first equation by 
  % \[
  %   \left(\alpha + \frac{\gamma}{2}\right) \left( \Delta + \frac{\zeta^2}{2}\right) = \left(\frac{\zeta^2}{4} - c\right) \gamma, 
  % \]
  % taking their quotients, we have
  \[
    \zeta\frac{\alpha + \gamma/2}{\Delta + \zeta^2/2} + \frac{1}{\zeta} \frac{\Delta + \zeta^2/2}{\alpha + \gamma/2}
    = \frac{\gamma}{\zeta} + \frac{\zeta}{\gamma}, 
  \]
  whence a new variable $\lambda = (\Delta + \zeta^2/2)/(\alpha + \gamma/2)$ satisfies $\lambda = \gamma$ or $\zeta^2/\gamma$.

  In accordance with this, we write $c = \zeta^2 r(1-r)$ so that % $\zeta^2 - 4c = 4\zeta^2(r - 1/2)^2$ and 
  % Moreover as a replacement of $c$
  % we introduce a new constant parameter $r$ by $c = \zeta^2r(1-r)$ so that $\zeta^2 - 4c = 4\zeta^2(r - 1/2)^2$ and 
  \[
    \left( \alpha + \frac{\gamma}{2}\right)^2 = \left( \frac{\zeta^2}{4} - c\right) \frac{\gamma}{\lambda}
  =
  \begin{cases}
    \zeta^2(r-1/2)^2 &(\gamma = \lambda)\\
    \zeta^4(r-1/2)^2/\lambda^2 &(\gamma = \zeta^2/\lambda)
  \end{cases}. 
  \]
  
  All shifting parameters (including $\beta = 1/2$) are then expressed in terms of $\lambda$ ($r$ and $\zeta$ being constants) as follows:
\begin{enumerate}
\item
  For $\gamma = \lambda$, 
  \[
    \alpha = -\frac{\lambda}{2} \pm \zeta(r - \frac{1}{2}),\quad
    \Delta = - \frac{\zeta^2}{2} \pm \zeta(r- \frac{1}{2}) \lambda,\quad 
    \delta = -\Delta + \alpha\gamma = \frac{\zeta^2 - \lambda^2}{2}
  \]
  ($\pm$ corresponds to the symmetry $r \leftrightarrow 1-r$ and it suffices to consider one choice) and 
   \[
    G(w) = -\alpha\left( w + \alpha + \gamma + \frac{c+\Delta}{\alpha}\right) (w^2 - \zeta^2).
  \]
  Thus the parameter $\lambda$ is determined from the remaining root
  $-(\alpha+\gamma) - (c+\Delta)/\alpha$ of $G$ by solving a quadratic equation. 
 \item
  For $\gamma = \zeta^2/\lambda$, again $\pm(r -1/2)$ corresponds to the symmetry $r \leftrightarrow 1-r$ and we may choose
  \[
    \alpha = -\zeta^2 \frac{r}{\lambda}, \quad
    \Delta = - r\zeta^2, \quad 
    \delta = r\zeta^2 \frac{\lambda^2 - \zeta^2}{\lambda^2}, 
  \]
  which are used to obtain 
  \[
    G(w) %= -\left(\alpha w + \frac{c\gamma^2 + \Delta^2}{\zeta^2}\right) (w^2 - \zeta^2)
    = \frac{r\zeta^2}{\lambda} \left(w - r\lambda - \zeta^2(1-r)\frac{1}{\lambda}\right) (w^2 - \zeta^2). 
  \]
  % \[
  %   \alpha + \frac{\gamma}{2} = \pm i (r - \frac{1}{2}) \ \text{or}\ \pm (r - \frac{1}{2}) \frac{1}{\lambda}.
  % \]
  % \[
  %   \alpha = - \frac{\lambda}{2} \pm i (r - \frac{1}{2}),\quad \gamma = \lambda,\quad \Delta = \frac{1}{2} \pm i (r - \frac{1}{2}) \lambda  
  % \]
  % or
  % \[
  %   \alpha = \frac{1}{2\lambda} \pm (r - \frac{1}{2}) \frac{1}{\lambda},\quad \gamma = - \frac{1}{\lambda},\quad
  %   \Delta = \frac{1}{2} \pm (r - \frac{1}{2}). 
  % \]
  %
  % For $\gamma = -1/\lambda$ and $\alpha = r/\lambda$, we have 
  % \[
  %   G = - \frac{r}{\lambda} \Bigl(w - (r\lambda - (1-r)\lambda^{-1})\Bigr) (w^2+1).
  % \]
\end{enumerate}
\end{Example}

% \begin{Remark}
%   When $\zeta = 0$, i.e., $c\gamma + \Delta(\alpha + \gamma/2) = 0 = c\gamma^2 + \Delta^2$, $\gamma = 0$ implies $\delta = 0$.
%   In other words, for $\delta \not= 0$, $\gamma \not= 0$ and one sees that $\gamma = \pm 2i\sqrt{c} - 2\alpha$ and
%   $\delta = -2(\alpha \mp i \sqrt{c})^2$. 
% \end{Remark}

\section{Positive Solutions}
% \textbf{Supplementary Discussion on Positivity.}
% \color{teal}
We here discuss positivity of algebraic solutions described in the previous section.
\subsection{One-Shift}
First consider positive solutions of $S_{\alpha,\beta}$ in Proposition~\ref{one-shift}, i.e.,
solutions satisfying the condition $\alpha \in \R$ and $\beta > 0 \iff \sigma = (1-\beta)/\beta > -1$,
which will be described in terms of roots of $G_0$. 
% which is equivalent to requiring $\sigma = (1-\beta)/\beta > -1$.
% In terms of roots $a$, $b$ of the denominator $Q = -G_0/\beta$, the positivity condition $\alpha \in \R$ and $\beta < 0$ on $S_{\alpha,\beta}$
% First remark that the reality of $S_{\alpha,\beta}$ implies $a+b, ab \in \R$.

In the degenerate case (iii) ($\beta = 1/2$), the positivity is nothing but the reality $\alpha \in \R$ and equivalent to the condition
that the root of $G_0$ is in the range $\R \setminus (-2c,2c)$. % $(-\infty,-2c] \cup [2c,\infty)$. 

We then look into the case $\beta \not= 1/2 \iff \sigma \not= 1$, i.e., the case $\deg G_0 = 2$.
Since the reality of $S_{\alpha,\beta}$ is equivalent to that of $F_0$ and $G_0$, the roots $a,b$ of $G_0$ should satisfy $a+b, ab \in \R$.

The double root case (ii) ($a=b \in \R$) 
admits real solutions if and only if $a^2 \geq 4c$. % with real solutions given there as
% \[
%   \alpha = a \pm \sqrt{a^2-4c}, \quad
%   \beta = \frac{\sqrt{a^2-4c}}{4c} (\mp a - \sqrt{a^2-4c}). 
% \]
Among these, a positive solution appears if and only if $a^2 > 4c$ and it takes the form of 
$\beta = (4c - a^2 + |a| \sqrt{a^2-4c})/4c$ and
\[
  \alpha =
  \begin{cases}
    a - \sqrt{a^2-4c} &(a>2\sqrt{c}), \\
    a + \sqrt{a^2-4c} &(a < -2\sqrt{c}).
  \end{cases} 
\]

We next discuss the case (i) ($a \not= b$). When $a+b=0$ and $0 \not= a^2 \in \R$,
the solution $(\alpha,\beta) = (\pm \sqrt{a^2-4c}, 1)$ is real if and only if $a^2 \geq 4c$
with two real solutions automatically positive because $\beta = 1 > 0$.
Meanwhile other solutions $\alpha=0$ and $\beta = a/(a\pm \sqrt{a^2-4c})$ are real if and only if
$a^2 \geq 4c$ or $a^2 < 0$. If $a^2 \geq 4c$, $a/(a\pm \sqrt{a^2-4c}) >0$ and two real solutions are positive.
For a purely imaginary root $a = it$ with $0 \not= t \in \R$, a positive choice in 
\[
  \beta = \frac{a}{a \pm \sqrt{a^2-4c}} = \frac{t}{t \pm \sqrt{t^2+4c}}
\]
is given by
\[
  4c\beta =
  \begin{cases}
    -t^2 + t\sqrt{t^2+4c} &(t>0),\\
    -t^2 - t\sqrt{t^2+4c} &(t<0). 
  \end{cases}
\]

Now consider the case (i) ($a \not= b$) and assume that $a+b \not= 0$.
Then $\sigma \not= 0$ and we have two alternatives: $a,b \in \R$ or $a = \overline{b} \not\in \R$.
% 
% When $\beta = 1$, $a+b = 0$ and $\alpha = \pm \sqrt{a^2-4c}$ imply $a^2-4c = b^2-4c \geq 0$ to satisfy $\alpha \in \R$.
% Otherwise, $\sigma \not= 0$ and an algebraic expression

For real roots $a,b \in \R$, the expression 
\[
  \sigma =\pm \frac{\sqrt{a^2-4c} \pm \sqrt{b^2-4c}}{a-b}, 
\]
admits a real combination of $\sigma$ if and only if $a^2-4c \geq 0$ and $b^2 - 4c \geq 0$. 

For imaginary roots, putting $a=\zeta$ and $b = \overline{\zeta}$ with $\zeta \in \C \setminus \R$,
we have 
 % We shall now deal with imaginary roots: $G_0 = (1+2\beta) (w-\zeta)(w-\overline{\zeta})$ with $\zeta \in \C \setminus \R$.
 % Putting $a=\zeta$ and $b = \overline{\zeta}$, $\sigma = (1+\beta)/\beta$ is expressed by
 \[
   \sigma %= \pm \frac{\sqrt{\zeta^2 - 4c} \pm \sqrt{{\overline\zeta}^2 - 4c}}{\zeta - \overline{\zeta}}
   = \pm i \frac{\sqrt{4c - \zeta^2} \pm \overline{\sqrt{4c - \zeta^2}}}{\zeta - \overline{\zeta}}. 
 \]
 Among these, real ones are given by 
 \[
   \sigma = \pm i \frac{\sqrt{4c - \zeta^2} + \overline{\sqrt{4c - \zeta^2}}}{\zeta - \overline{\zeta}}
   = \pm \frac{\text{Re}\,\sqrt{4c - \zeta^2}}{\text{Im}\, \zeta}. 
 \] 
 % for which we claim that $|\sigma|> 1$.

% Thus the two-shift condition is satisfied only when $a^2 - 4c \geq 0$ and $b^2 - 4c \geq 0$ with the case $a = b \pm 2\sqrt{c}$ excluded,
% which is assumed from here on.

% and we need to choose real ones among four combinations to impose $\sigma < 1$. 

% For real roots $a,b \in \R$, to have a real $\sigma$, $a$ and $b$ must satisfy $a^2 \geq 4c$ and $b^2 \geq 4c$.
% Then, under this restriction on $a,b$, all four possibilities give real $\sigma$'s.

 \begin{Lemma}
   For real combinations of $\sigma$, the following holds.
   \begin{enumerate}
   \item
     If $a,b \in \R$ satisfy $a^2 \geq 4c$ and $b^2 \geq 4c$ with the case $a=b = \pm 2\sqrt{c}$ excluded, 
     % to make the measure $\sqrt{4c - t^2}/|(t-a)(t-b)|$ bounded,
     then $|\sigma| \geq 1$ or $|\sigma| \leq 1$ according to
     $ab > 0$ or $ab < 0$.
   \item
     If $a = \overline{b} \in \C \setminus \R$, then $|\sigma| > 1$. 
   \end{enumerate}
 \end{Lemma}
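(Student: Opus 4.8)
The plan is to prove both parts by direct manipulation of the two closed-form expressions for $\sigma$ already derived in the one-shift analysis, namely
\[
  \sigma = \pm\frac{\sqrt{a^2-4c}\pm\sqrt{b^2-4c}}{a-b}
\]
for real roots, and the reduction $\sigma = \pm(\text{Re}\,\sqrt{4c-\zeta^2})/\text{Im}\,\zeta$ for $a=\overline{b}=\zeta\notin\R$. For part (i) the quantity that controls $|\sigma|$ versus $1$ is $\sigma^2-1$, and from the factorized form $\sigma^2 = (\sqrt{a^2-4c}\pm\sqrt{b^2-4c})^2/(a-b)^2$ we get
\[
  \sigma^2 - 1 = \frac{(\sqrt{a^2-4c}\pm\sqrt{b^2-4c})^2 - (a-b)^2}{(a-b)^2}.
\]
Expanding the numerator, the $a^2,b^2$ terms contribute $a^2-4c+b^2-4c-a^2+2ab-b^2 = 2ab - 8c$, so the numerator equals $2ab - 8c \pm 2\sqrt{(a^2-4c)(b^2-4c)}$. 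Hence the sign of $\sigma^2-1$ is the sign of $ab - 4c \pm \sqrt{(a^2-4c)(b^2-4c)}$. First I would dispose of signs: under the hypothesis $a^2\ge 4c$, $b^2\ge 4c$ with $a=b=\pm2\sqrt c$ excluded, the radicand $(a^2-4c)(b^2-4c)$ is $\ge 0$ so the square root is real and the expression is well defined; it is exactly here that the excluded case matters, since otherwise $a-b=0$ could coincide with a vanishing numerator.

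The core step is then the inequality: if $ab>0$ then $ab-4c \pm\sqrt{(a^2-4c)(b^2-4c)} \ge 0$ for at least the relevant sign choices, while if $ab<0$ it is $\le 0$. I would argue as follows. When $ab>0$, both $a,b$ have $|a|,|b|\ge 2\sqrt c$ and the same sign, so $ab = |ab| \ge 4c$; meanwhile $(a^2-4c)(b^2-4c)\le (ab)^2 - 4c(a^2+b^2) + 16c^2$... more cleanly: set $p=\sqrt{a^2-4c}$, $q=\sqrt{b^2-4c}$, both $\ge 0$, and note $a^2b^2 = (p^2+4c)(q^2+4c) = p^2q^2 + 4c(p^2+q^2) + 16c^2$, so $(ab)^2 - (pq)^2 = 4c(p^2+q^2) + 16c^2 > 0$, giving $|ab| > pq$; combined with $ab = |ab| \ge 4c > 0$ this yields $ab - 4c \ge 0$ and $ab \pm pq \ge ab - pq > 0$, hence $\sigma^2\ge 1$ for every sign combination (with equality only in boundary subcases). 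When $ab<0$, the same identity gives $|ab|>pq$ but now $ab<0$, so $ab - 4c < -pq \le 0$ wait — one must check $ab - 4c + pq$: since $ab<0$ and $ab-4c<0$, and $|ab-4c| = 4c - ab = 4c + |ab| > |ab| > pq$, we get $ab - 4c + pq < 0$; and $ab-4c-pq<0$ trivially; so $\sigma^2 - 1 \le 0$. This settles (i). For (ii), with $\zeta = s+it$, $t\neq 0$, I would show directly that $(\text{Re}\,\sqrt{4c-\zeta^2})^2 > t^2 = (\text{Im}\,\zeta)^2$: writing $\sqrt{4c-\zeta^2} = u+iv$ one has $u^2-v^2 = 4c - s^2 + t^2$ and $uv = -st$, and from $u^2 = \tfrac12\big((4c-s^2+t^2) + \sqrt{(4c-s^2+t^2)^2 + 4s^2t^2}\big)$ one checks $u^2 - t^2 = \tfrac12\big(4c - s^2 - t^2 + \sqrt{(4c-s^2+t^2)^2+4s^2t^2}\big)$, and the bracket is positive because $\sqrt{(4c-s^2+t^2)^2+4s^2t^2} > |4c - s^2 + t^2| \ge -(4c-s^2+t^2)$ strictly when $st\neq 0$, while if $s=0$ it reduces to $4c - t^2 + |4c - t^2| \ge 0$ with strictness forced by $c>0$ and $t\neq 0$ after rechecking the edge $t^2 = 4c$...

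The step I expect to be the main obstacle is precisely this last edge-case bookkeeping in (ii): when $\zeta$ is purely imaginary ($s=0$) and $t^2$ is near or equal to $4c$, the naive bound degenerates, and I would need to handle it by returning to the original relation $\sigma = t/(t\pm\sqrt{t^2+4c})$ (from the $a+b=0$, $a=it$ computation earlier in the section) and verifying $|\sigma|>1$ there directly, noting $|t\pm\sqrt{t^2+4c}| \le |t| $ is impossible since $\sqrt{t^2+4c} > |t|$ forces one of the two signs to produce a denominator of absolute value less than $|t|$. A secondary nuisance is tracking which of the four $\pm$ combinations the statement intends by "$|\sigma|\ge1$ or $|\sigma|\le1$" — I read it as: for each admissible real value of $\sigma$, $|\sigma|\ge1$ when $ab>0$ and $|\sigma|\le1$ when $ab<0$ — and the computation above indeed covers all sign choices uniformly, so no case split on the outer $\pm$ is needed beyond confirming reality.
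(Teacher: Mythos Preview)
Your overall strategy matches the paper's: for (i) reduce $|\sigma|\lessgtr 1$ to the sign of
\[
  (\sqrt{a^2-4c}\pm\sqrt{b^2-4c})^2-(a-b)^2=2\bigl(ab-4c\pm pq\bigr),\qquad p=\sqrt{a^2-4c},\ q=\sqrt{b^2-4c},
\]
and for (ii) write $\sqrt{4c-\zeta^2}=u+iv$ and compare $u^2$ with $(\mathrm{Im}\,\zeta)^2$.

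There is, however, a genuine gap in your $ab>0$ argument. You establish $|ab|>pq$ and $ab-4c\ge 0$, and then assert that these yield $\sigma^2\ge 1$ for both sign choices. For the ``$-$'' sign you need $ab-4c-pq\ge 0$, and the two facts you have do \emph{not} imply this (numerically: $ab=10$, $4c=6$, $pq=5$ would satisfy both of your inequalities yet give $ab-4c-pq=-1$). What is actually needed---and what the paper uses---is the sharper inequality
\[
  (a^2-4c)(b^2-4c)\le(ab-4c)^2,
\]
which is equivalent to $(a-b)^2\ge 0$ after expansion; it gives $|ab-4c|\ge pq$, hence $ab-4c\ge pq$ when $ab>0$ and $4c-ab\ge pq$ when $ab<0$, settling both cases at once. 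Your $ab<0$ argument is fine as written, because there you did bound $|ab-4c|=4c+|ab|>|ab|>pq$.

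For (ii) your worry about the purely imaginary edge case is unnecessary: your own formula already gives
\[
  u^2-t^2=\tfrac12\Bigl((4c-s^2-t^2)+\sqrt{(4c-s^2+t^2)^2+4s^2t^2}\Bigr),
\]
and checking when the bracket is positive amounts (after squaring) to $16ct^2>0$, which holds for all $t\ne 0$. The paper phrases the same computation as a monotonicity statement: after dividing through by $\eta^2$ it observes that $x^2$ is strictly increasing in the parameter $t=1+(4c-\xi^2)/\eta^2$ and that this $t$ exceeds the value $(\eta^2-\xi^2)/\eta^2$ at which $x^2=1$, since $4c>0$. No separate treatment of $s=0$ is required.
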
 

 \begin{proof}%work
  (i) Starting with $(a^2-4c)(b^2-4c) \leq (ab-4c)^2$ 
  and the assumption $|ab| \geq 4c$, if $ab>0$,
  % \[
  %   ab - 4c \pm \sqrt{a^2-4c}\sqrt{b^2-4c}, 
  % \]
  $ab-4c = |ab-4c| \geq \pm \sqrt{a^2-4c}\sqrt{b^2-4c}$ implies
  $ab-4c \mp \sqrt{a^2-4c}\sqrt{b^2-4c} \geq 0$ and hence 
  \[
    (\sqrt{a^2-4c} \pm \sqrt{b^2-4c})^2 - (a-b)^2 = 2ab - 8c \pm 2\sqrt{a^2-4c}\sqrt{b^2-4c} \geq 0. 
  \]
  % equivalently $|\sigma| \geq 1$.
  Likewise, if $ab < 0$, $4c-ab = |ab-4c| \geq \pm \sqrt{a^2-4c}\sqrt{b^2-4c}$ implies
  $ab-4c \pm \sqrt{a^2-4c}\sqrt{b^2-4c} \leq 0$ and hence 
  \[
    (\sqrt{a^2-4c} \pm \sqrt{b^2-4c})^2 - (a-b)^2 = 2ab - 8c \pm 2\sqrt{a^2-4c}\sqrt{b^2-4c} \leq 0. 
  \]
%  equivalently $|\sigma| \leq 1$.
  
 (ii) 
 Let $\zeta = \xi + i\eta$ with $\xi, \eta \in \R$ and $\eta \not= 0$ so that 
 \[
   \sigma = \pm \text{Re}\,\sqrt{1 + (4c-\xi^2)/\eta^2 - 2i\xi/\eta}.
 \]
 To get a lower bound of $|\sigma|$, we write
 \[
   \sqrt{1 + \frac{4c - \xi^2}{\eta^2} - 2i \frac{\xi}{\eta}} = x + iy \iff
   \begin{cases}
     x^2 - y^2 &= 1 + (4c-\xi^2)/\eta^2,\\
     xy &= -\xi/\eta, 
   \end{cases}
 \]
 which is solved to be
 \[
   x^2 = \frac{t + \sqrt{t^2 + 4\xi^2/\eta^2}}{2},\quad
   y^2 = \frac{-t + \sqrt{t^2 + 4\xi^2/\eta^2}}{2}
 \]
 with the choice $t = 1 + (4c-\xi^2)/\eta^2$.
 Since $x^2$ is strictly increasing as a function of $t \in \R$ and takes $1$ at $t = (\eta^2-\xi^2)/\eta^2$,
 we see that $\sigma^2 = x^2 > 1$.
\end{proof}

\begin{Remark}
 $y^2 \geq 1$ or $y^2 \leq 1$ according to $\xi^2 - \eta^2 \geq 2c$ or $\xi^2 - \eta^2 \leq 2c$. 
 \end{Remark}

 \begin{Corollary} Let $\deg G_0 = 2$, i.e., $\beta \not= 1/2$. 
   \begin{enumerate}
     \item
   In the real root case, $\sigma > -1$ is satisfied by exactly two positive solutions of $\sigma$ if $ab>0$, whereas
   all four solutions of $\sigma$ satisfy $\sigma > -1$ if $ab < 0$.
 \item
   In the imaginary root case, given $\zeta \in \C \setminus \R$ satisfying $G_0(\zeta) = 0$, there exists exactly one solution $\sigma$ 
   satisfying $\sigma > -1$. 
 \end{enumerate}
\end{Corollary}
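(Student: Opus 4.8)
The plan is to read off the claim directly from the Lemma that precedes it, since the Corollary is essentially a counting statement about which of the (at most) four values of $\sigma$ produced in the one-shift analysis satisfy the positivity threshold $\sigma > -1$. First I would recall that for $\deg G_0 = 2$ with $a \neq b$ and $a+b \neq 0$ the four candidate values are $\sigma = \pm(\sqrt{a^2-4c} \pm \sqrt{b^2-4c})/(a-b)$, that they come in $\pm$ pairs, and that $\sigma > -1$ is exactly the positivity condition $\beta > 0$; the degenerate sub-cases ($a+b=0$, or real roots failing $a^2\ge 4c$ or $b^2\ge 4c$) are handled separately in the text just above, so here I only treat the generic real and imaginary situations covered by the Lemma.

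For part (i), the real root case, I would invoke the Lemma: among the real combinations of $\sigma$, when $ab > 0$ we have $|\sigma| \geq 1$, and when $ab < 0$ we have $|\sigma| \leq 1$. In a $\pm$ pair $\{\sigma, -\sigma\}$, the condition $\sigma > -1$ is satisfied by exactly one member when $|\sigma| > 1$ (namely the positive one, which exceeds $1 > -1$, while its negative is below $-1$), and by \emph{both} members when $|\sigma| \leq 1$ (both lie in $[-1,1]$, and one must check the boundary value $\sigma = -1$ is excluded — which follows because $\sigma = -1$ forces $\beta$ undefined or a degenerate situation already set aside). There being two $\pm$ pairs (from the inner $\pm$), we get exactly two solutions with $\sigma > -1$ when $ab > 0$ and all four when $ab < 0$. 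One subtlety to flag: when $ab < 0$ the two pairs need not be distinct as sets, but the counting "all four" is meant with multiplicity / as the four algebraic solutions, so this is consistent with the statement of Proposition~\ref{one-shift}.

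For part (ii), the imaginary root case $a = \overline b = \zeta \in \C \setminus \R$, I would use part (ii) of the Lemma, which gives $|\sigma| > 1$ for every real combination of $\sigma$. But here the reality analysis already showed that only the combination $\sigma = \pm \mathrm{Re}\,\sqrt{4c-\zeta^2}/\mathrm{Im}\,\zeta$ is real (the other sign choice inside yields $\sigma = \pm i\,\mathrm{Im}\,\sqrt{4c-\zeta^2}/\mathrm{Im}\,\zeta$, which is purely imaginary unless it vanishes), so among the four formal solutions only one $\pm$ pair survives the reality requirement; and within that pair, since $|\sigma| > 1$, exactly the positive representative satisfies $\sigma > -1$. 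Hence there is exactly one admissible $\sigma$.

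The main obstacle is not any hard estimate — those are delivered by the Lemma — but the bookkeeping: correctly matching the four formal algebraic solutions to their $\pm$ structure, ruling out the boundary case $\sigma = -1$ (equivalently $\beta$ ill-defined), and in the imaginary case being careful that the reality constraint eliminates two of the four solutions \emph{before} the positivity count is applied, so that "exactly one" rather than "exactly two" is obtained. I would organize the proof as: (1) restate the four solutions and the $\pm$ pairing; (2) translate $\sigma > -1$ into a per-pair count keyed on whether $|\sigma|$ exceeds or is at most $1$; (3) apply Lemma (i) for the real case to get the $ab>0$ versus $ab<0$ dichotomy; (4) apply the reality discussion plus Lemma (ii) for the imaginary case; (5) dispatch the excluded degenerate endpoint $a=b=\pm 2\sqrt c$ and $\sigma=-1$.
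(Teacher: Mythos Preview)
Your proposal is correct and matches the paper's approach: the paper gives no explicit proof of the Corollary, leaving it as an immediate consequence of the preceding Lemma, and your argument spells out precisely that deduction --- pairing the four $\sigma$-values as $\{\sigma,-\sigma\}$, invoking Lemma~(i) to decide whether each pair lies entirely in $|\sigma|\geq 1$ or $|\sigma|\leq 1$, and for the imaginary case combining the reality reduction (only the pair $\pm\mathrm{Re}\sqrt{4c-\zeta^2}/\mathrm{Im}\,\zeta$ survives) with Lemma~(ii). Your flagging of the boundary $\sigma=-1$ is harmless extra care: under the Lemma's hypotheses the inequalities are in fact strict (equality in $(a^2-4c)(b^2-4c)\le(ab-4c)^2$ forces $a=b$), so the endpoint never occurs.
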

% \color{black}

\subsection{Two-Shift}
Now we go on to two-shifts and consider the situation in Example~\ref{degree4}.
Recall then that $\alpha = 0$ or $\Delta = -4c(1-\beta)$. 
Notice also that, to have a solution of real parameters, $G$ is necessarily real and $\zeta^2 \in \R$ particularly.
% \textcolor{teal}{Notice also that, to have a solution of real parameters, $G$ is necessarily real and $\zeta^2 \in \R$ particularly.}
%\color{teal}
\begin{enumerate}
\item
$\alpha = 0$ with $\beta \not= 1/2$ a free parameter.
  % For a real $G$, The solution $(\beta,\delta)$ is then real
  The positivity condition on Jacobi parameters is then given by $\Delta = -\delta < 0$ for some choice of $\beta > 0$.
  The existence of positive $\delta$ therefore requires $\zeta \not= 0$ and,
  for a real $\zeta$, we may suppose that $\zeta = s > 0$ with $s^2 - 4c \geq 0$.
  The condition $\delta > 0$ is then equivalent to
\[
  \frac{1-\beta}{\beta} > \pm \frac{\sqrt{s^2 - 4c}}{s}. 
\]
Thus, for a $\beta$ in the range $(1-\beta)/\beta > \sqrt{s^2-4c}/s$, two choices of $\delta$ satisfy $\delta > 0$, 
for the range $-\sqrt{s^2-4c}/s < (1-\beta)/\beta \leq \sqrt{s^2 - 4c}/s$,
a positive $\delta$ is unique and given by 
% $\delta = -(1+\beta) s^2 + \beta s \sqrt{s^2 - 4c} <0$ with 
$\delta = (1-\beta) s^2 + \beta s \sqrt{s^2 - 4c} > 0$,
and no positive $\delta$'s for the range $(1-\beta)/\beta \leq -\sqrt{s^2-4c}/s$. 

For an imaginary $\zeta$, we may take $\zeta = is$ with $s>0$. Then 
\[
  (1-\beta)\zeta^2 \pm \beta\zeta \sqrt{\zeta^2 - 4c} > 0
  \iff
  \frac{1-\beta}{\beta} < \pm \sqrt{1+ 4c/s^2} 
\]
shows that the correct choice is
\[
  \delta =(1-\beta)\zeta^2 \pm \beta \zeta\sqrt{\zeta^2 - 4c} = -(1-\beta)s^2 + \beta s\sqrt{s^2+4c}
\]
and the condition $\delta > 0$ is satisfied exactly when 
\[
  \beta > \frac{s\sqrt{s^2 + 4c} - s^2}{4c}.
\]
%\color{black}

\item
  $\Delta = -4c(1-\beta)$ and $\alpha \not= 0$. We then have two subcases:
  \begin{enumerate}
    \item
$\alpha\not=0$ is a free parameter and (ii)-2 $\beta \not= 1/2$ is a free parameter with
the other parameters algebraically expressed in terms of $\beta$.

(ii)-1: 
% As a continuation of the second case in the above example, we assume $\alpha \not= 0$ and $\Delta = -4c(1-\beta)$.
% We here consider the case
$\beta$ is specified by the equation 
\[
  \frac{(1-\beta)^2}{\beta^2} = \frac{\zeta^2}{\zeta^2 - 4c}. 
\]
% with $\alpha$ a free parameter.

If $\zeta = s\geq 0$, $s$ must satisfy $s^2 - 4c > 0$, i.e., $s>2\sqrt{c}$, to get a real $\beta \not= 0$.
Moreover, in view of $(1-\beta)^2/\beta^2 > 1$,
there is a unique positive $\beta$ satisfying $(1-\beta)/\beta = s/\sqrt{s^2-4c} > 1$, i.e.,
$\beta = \sqrt{s^2-4c}/(s + \sqrt{s^2 - 4c}) < 1/2$. Then 
\[
  \gamma = -\alpha\frac{1-\beta}{1-2\beta} = -\alpha\frac{s}{s- \sqrt{s^2 - 4c}},
  \quad
  \Delta = -4c\frac{s}{s+ \sqrt{s^2-4c}}
\]
and
\[
  \delta = -\Delta + \alpha\gamma = -\frac{(\alpha^2-4c)s^2 + (\alpha^2+4c)s\sqrt{s^2-4c}}{4c}. 
\]
Thus $\delta > 0$ if and only if
\[
  (\alpha^2+4c) \sqrt{s^2-4c} < (4c-\alpha^2)s
\]
which is equivalent to
\[
  \alpha^2 < 4c\ \text{and}\ 
  \alpha^2 < 4c \frac{s-\sqrt{s^2-4c}}{s+\sqrt{s^2-4c}} = (s-\sqrt{s^2-4c})^2, %(\alpha^2+4c)(s^2-4c) < (4c-\alpha^2)^2 s^2
\]
namely
\[
  \alpha^2 < (s-\sqrt{s^2-4c})^2 \iff   -s + \sqrt{s^2-4c} < \alpha < s - \sqrt{s^2-4c}.
\]
% \[
%   \alpha^2 < 4c, \quad
%   ((\alpha - s)^2 + 4c - s^2)((\alpha+s)^2 + 4c-s^2) > 0. 
% \]
% Since
% \[
%   -s - \sqrt{s^2-4c} < -2\sqrt{c} < -s + \sqrt{s^2-4c} < s-\sqrt{s^2-4c} < 2\sqrt{c} < s+ \sqrt{s^2 - 4c}, 
% \]
% this is further equivalent to
% \[
%   -s + \sqrt{s^2-4c} < \alpha < s - \sqrt{s^2-4c}.
% \]

Now let $\zeta = is$ with $s>0$. This time,
\[
  \frac{(1-\beta)^2}{\beta^2} = \frac{s^2}{s^2+4c} \in (0,1)
\]
and we have two positive $\beta$'s belonging to $(1/2,1) \sqcup (1,\infty)$ so that % the interval $(1/2,\infty)$.
\[
  \frac{1-\beta}{\beta} = \pm \frac{s}{\sqrt{s^2+4c}}, \quad
  \gamma %= -\alpha \frac{1-\beta}{1-2\beta}
  = -\alpha\frac{s}{s\mp \sqrt{s^2 + 4c}},
    \quad
  \Delta = -4c\frac{s}{s \pm \sqrt{s^2 + 4c}}
\]
and
\[
  \delta = -\Delta + \alpha\gamma = -\frac{s}{4c} \Bigl((4c-\alpha^2)s \mp (4c+\alpha^2)\sqrt{s^2+4c}\Bigr). 
\]
In view of 
\[
  (4c-\alpha^2)s - (4c+\alpha^2) \sqrt{s^2+4c} < 0 <   (4c-\alpha^2)s + (4c+\alpha^2) \sqrt{s^2+4c}, 
\]
the correct choice of positivity is then 
\[
  \frac{1-\beta}{\beta} = \frac{s}{\sqrt{s^2+4c}}, \quad
  \gamma = -\alpha\frac{s}{s - \sqrt{s^2 + 4c}},
    \quad
  \Delta = -4c\frac{s}{s + \sqrt{s^2 + 4c}}
\]
and
\[
  \delta = \frac{s}{4c} \Bigl((\alpha^2-4c)s + (4c+\alpha^2)\sqrt{s^2+4c}\Bigr).
\]
with $\alpha$ an arbitrary non-zero real parameter. 

% We then have $(1+\beta)^2/\beta^2 > 1$ or $(1+\beta)^2/\beta^2 < 1$ according to $\zeta^2>0$ or $\zeta^2 < 0$,
% whence there is a unique $\beta$ satisfying $(1+\beta)^2/\beta^2 < 1 \iff (1+\beta)/\beta < -1$ if $\zeta^2 > 0$
% and two choices of $\beta$ satisfy
% $(1+\beta)/\beta < 1 \iff \beta < -1/2$ if $\zeta^2 < 0$. 

% We next deal with the case
% \[
%   \frac{(1+\beta)^2}{\beta^2} \not= \frac{\zeta^2}{\zeta^2 - 4c}.
% \]

%   In the general case, we have two free parameters $0 \not= \alpha \in \R$ and $\beta < 0$ with other parameters given by
% \[
%   \gamma = -\alpha \frac{1+\beta}{1+2\beta},
%   \quad
%   \delta = \Delta - \alpha\gamma = - 4c(1+\beta) - \alpha\gamma
%   = \frac{1+\beta}{1+2\beta}(\alpha^2 - 4c - 8c\beta).  
% \]
% By noticing $(\alpha^2 - 4c)/(8c) > -1/2$,
% \[
%   \delta < 0 \iff
%   \beta \in (-1,-1/2) \bigcup (\frac{\alpha^2 - 4c}{8c},\infty). 
% \]
% Thus, if $\alpha^2 - 4c \geq 0$, $-1 < \beta < -1/2$ and, if $\alpha^2 - 4c < 0$,
% adding to this range $(\alpha^2 - 4c)/8c < \beta < 0$ is allowed. 

% \begin{Example}
%     Assume that $G$ is an even polynomial of degree $4$ ($\beta \not= -1/2$ particularly) satisfying $G(\pm i) = 0$. Then 
% \end{Example}

%\color{teal}
\item 
From algebraic expressions of $\alpha$, $\gamma$ and $\delta$ (or $\Delta$) in Example~`ref{degree4},
  % For the positivity condition $\beta< 0$ and $\delta < 0$, we discuss separately below.
  their reality is fulfilled if and only if $\zeta^2 - 4c \geq 0$ and $0 \not= \beta \in \R$.
  Then, putting $\zeta = s > 0$ with $s^2 \geq 4c$, we have 
  \[
    \delta = - \Delta + \alpha\gamma = \frac{(1-\beta)}{\beta^2} \Bigl(s^2\beta^2 - (s^2 - 4c)(1-\beta)^2\Bigr)  
  \]
and the condition $\delta > 0$ is rephrased by
  \[
    (1-\beta) \bigl(\sqrt{s^2-4c} + (s - \sqrt{s^2-4c})\beta\bigr) \bigl(-\sqrt{s^2-4c} + (s + \sqrt{s^2-4c})\beta\bigr) > 0. 
  \]
  In view of
  \[
-\frac{\sqrt{s^2-4c}}{s - \sqrt{s^2-4c}} < 0 < \frac{\sqrt{s^2-4c}}{s + \sqrt{s^2-4c}} < 1, 
  \]
  the positivity condition $\beta > 0$ and $\delta > 0$ is equivalent to
  \[
    \frac{\sqrt{s^2-4c}}{s + \sqrt{s^2-4c}} < \beta < 1.
  \]
  Note that $\sqrt{s^2-4c}/(s+\sqrt{s^2-4c}) < 1/2$ ($s>0$).
\end{enumerate}
\end{enumerate}
% \color{black}

\bigskip
%\color{teal}
We now investigate the positivity condition on parameters in Example~\ref{degree3}.
First notice that, for $\zeta = 0$, $c\gamma^2 + \Delta^2 = 0$ implies $\gamma = \Delta = 0$ if parameters are real, whence
$\delta = -\Delta + \alpha\gamma = 0$. Thus, to have a solution $\delta > 0$, $\zeta = 0$ should be excluded.
As for the case $(\zeta^2-4c)\gamma = 0$,
the $\gamma=0$ solution is positive if and only if $\zeta^2 > 4c$, whereas the $\zeta^2=4c$ solution is positive
if and only if $0 < \alpha^2 < c$. 

Next we go on to the $\lambda$-solutions. If there exist any real solutions,
we should have $\zeta^2, \gamma \in \R$ and then $\lambda \in \R$ as their combinations.

% Remark that, from the expression $\lambda = (\Delta + \zeta^2/2)/(\alpha + \gamma/2)$, the reality of parameters implies 
%   $\lambda \in \R$ as well as $\zeta^2 \in \R$. % From $\overline{G} = G$.  

Recall taht, with $\lambda$ a free real parameter, we have two analytic expressions for parameters:
\[
 \gamma = \lambda, \quad \alpha = -\frac{\lambda}{2} \pm \zeta ( r- \frac{1}{2}),\quad
  \Delta = - \frac{\zeta^2}{2} \pm \zeta(r - \frac{1}{2}) \lambda,\quad 
  \delta = \frac{\zeta^2 - \lambda^2}{2}
\]
  \[
    \gamma = \frac{\zeta^2}{\lambda}, \quad
    \alpha = -\zeta^2 \frac{r}{\lambda}, \quad
    \Delta = - r\zeta^2, \quad 
    \delta = r\zeta^2 \frac{\lambda^2 - \zeta^2}{\lambda^2}. 
  \]

  Consider the case $\zeta = s > 0$. To have real solutions, one of $\zeta(r-1/2)$ and $r/\lambda$ must be real, i.e., $r \in \R$,
  which together with $r(1-r) = c/\zeta^2 > 0$ implies $0 < r < 1$ and hence $s^2 - 4c = 4s^2(r-1/2)^2 \geq 0$.
  Conversely, if $\zeta = s >0$ with $s^2 \geq 4c$, then
  \[
    r = \frac{s \pm \sqrt{s^2 - 4c}}{2s}
  \]
  satisfies $0 < r < 1$.
  Thus two analytic expressions of $\alpha$, $\gamma$ and $\delta$ give real solustions for $\lambda \in \R$
  with the positivity condition $\delta > 0$ selecting one of them according to $\lambda^2 - s^2 > 0$ or $\lambda^2 - s^2 < 0$.

  % \begin{Remark}
  % At the boundary $\lambda = \pm s$, positive solutions are switched to each other if $\pm$ is suitably chosen. 
  % \end{Remark}

  Next let $\zeta = is$ with $s>0$. From the expression 
  \[
    r = \frac{1 \pm \sqrt{1 + 4c/s^2}}{2}, 
  \]
  $r$ is real and one sees that real solutions are restricted to the one
  \[
    \gamma = - \frac{s^2}{\lambda}, \quad 
    \alpha = s^2 \frac{r}{\lambda}, \quad
    \Delta = rs^2, \quad 
    \delta = -rs^2 \frac{s^2 + \lambda^2}{\lambda^2}. 
  \]
  The positivity $\delta > 0$ is then satisfied for the choice
  \[
   r = \frac{1 - \sqrt{1 + 4c/s^2}}{2} < 0. 
  \] 
 %\color{black}

\section{Residue Calculus in Stieltjes Transforms}
% We here relate shifted roots to probability measures via Stieltjes inversion formula.
% We first review the process to compute the transform as a contour integral. 

% Let $S(w) = \frac{A(w) + \sqrt{w^2 - 4c}}{B(w)}$ with $A$ and $B$ polynomials of $w$ and consider
%   \[
%     \lim_{\epsilon \to +0} \Bigl(S(t+i\epsilon) - S(t-i\epsilon)\Bigr)\, dt
%     \]
  
% Question: Given $B$, to what extent is $A$ determined under the two-step-shift condition?

% Given a probability measure $\mu$, let $\mu_{a,b}$ ($0 \not= a \in \R$, $b \in \R$) be defined by
% \[
%   \int_\R f(t)\, \mu_{a,b}(dt) = \int_\R f(at + b)\, \mu(dt).
% \]
% Then,
% \[
%   S_{\mu_{a,b}}(w) = \int_\R \frac{1}{t-w}\, \mu_{a,b}(dt)
%   = \int_\R \frac{1}{at+b - w}\, \mu(dt)
%   = \frac{1}{a} S_\mu((w-b)/a)
% \]
% with its continued fraction given by
%  \[
% \cfrac{1}
% {aa_0 + b -w + \cfrac{-a^2b_0^2}
% {aa_1 + b -w + \cfrac{-a^2b_1^2}
% {aa_2 + b -w + \ddots}
% }}. 
% \]
% Thus $(a_n)$ and $(b_n)$ are shifted to $(aa_n +b)$ and  $(ab_n)$. 

% \begin{Example}
% $\mu$ is symmetric, i.e., invariant under $t \leftrightarrow -t$, if and only if $a_j = 0$ ($j \geq 0$). 
% \end{Example}

% In view of this, we may restrict ourselves to the case $c=1$ for example but we shall keep $c$ for the moment and
Let $\mu(dt) = \Bigl(\sqrt{4c-t^2}/Q(t)\Bigr)dt$ be a finite measure supported by $I_c = [-2\sqrt{c},2\sqrt{c}]$,
where $Q(t)$ is a rational function of $t$
satisfying $Q(t) > 0$ ($-2\sqrt{c} < t < 2\sqrt{c}$) ($Q$ being a ratio  of real polynomials then)
with $\pm 2\sqrt{c}$ being at worst simple poles of $1/Q$ 
($(w^2 - 4c)/Q(w)$ being bounded near $w=\pm 2 \sqrt{c}$. 
A typical example is the density function $1/\sqrt{4c - t^2}$ ($Q(w) = 4c - w^2$) of arcsine law.
% \[
%   \mu(dt) = \frac{\sqrt{4c-t^2}}{R(t)}\, dt
% \]
% gives a locally finite measure supported by $I_c$ if and only if
% $R$ has at most simple zeros at $\pm 2\sqrt{c}$, i.e.,
% $(w^2 - 4c)/R(w)$ is bounded near $w=\pm 2 \sqrt{c}$, 
% which is assumed in what follows. Notice that $\mu$ is then a finite measure. 

We shall here review the process to compute the Stieltjes transform of $\mu$ as a contour integral. 
Let $C_\epsilon$ be a contour surrounding the interval $I_c$ counter-clockwise and shrinking to the line segment $I_c$ as $\epsilon \to +0$. 
Then, given $w \in \C \setminus \R$, the contour integral
\[
  \oint_{C_\epsilon} \frac{1}{z-w} \frac{\sqrt{z^2 - 4c}}{Q(z)}\, dz
\]
($\sqrt{z^2-4c}$ being analytic on $\C \setminus I_c$) is approximated by
\begin{align*}
  &\int_{-2\sqrt{c}}^{2\sqrt{c}} \frac{1}{x-i\epsilon - w} \frac{\sqrt{(x-i\epsilon)^2 - 4c}}{Q(x-i\epsilon)}\, dx
      - \int_{-2\sqrt{c}}^{2\sqrt{c}} \frac{1}{x+i\epsilon - w} \frac{\sqrt{(x+i\epsilon)^2 - 4c}}{Q(x+i\epsilon)}\, dx\\
  &= \int_{-2\sqrt{c}}^{2\sqrt{c}} \frac{1}{x-i\epsilon - w} \frac{-i\sqrt{4c-(x-i\epsilon)^2}}{Q(x-i\epsilon)}\, dx
      - \int_{-2\sqrt{c}}^{2\sqrt{c}} \frac{1}{x+i\epsilon - w} \frac{i\sqrt{4c-(x+i\epsilon)^2}}{Q(x+i\epsilon)}\, dx, 
\end{align*}
which approaches
\[
      -2i\int_{-2\sqrt{c}}^{2\sqrt{c}} \frac{1}{x - w} \frac{\sqrt{4c-x^2}}{Q(x)}\, dx = -2i S_\mu(w) 
\]
as $\epsilon \to +0$.
Thus, for a sufficiently small $\epsilon>0$, 
\[
S_\mu(w) = \frac{i}{2} \oint_{C_\epsilon} \frac{1}{z-w} \frac{\sqrt{z^2 - 4c}}{Q(z)}\, dz, 
\]
whereas, for a sufficiently large $R>0$ and a sufficiently small $r>0$, the residue theorem gives
\begin{align*}
  \oint_{|z| = R} &\frac{1}{z-w} \frac{\sqrt{z^2 - 4c}}{Q(z)}\, dz\\
  &= \sum_{j=0}^l \oint_{|z-\zeta_j| = r} \frac{1}{z-w} \frac{\sqrt{z^2 - 4c}}{Q(z)}\, dz
  + \oint_{C_\epsilon} \frac{1}{z-w} \frac{\sqrt{z^2 - 4c}}{Q(z)}\, dz\\
  &= 2\pi i \frac{\sqrt{w^2 - 4c}}{Q(w)} + 2\pi i \sum_{j=1}^l \frac{q_j(w)}{(\zeta_j - w)^{m_j}}
    - 2i S_\mu(w). 
\end{align*}
Here $\zeta_0 = w$ satisfies $Q(w) \not= 0$, %is in a generic position
$\zeta_j$ ($j=1,2,\dots,l$) are zeros of $Q(z)$ in $\C \setminus I_c$ with multiplicity $m_j$, and
\[
  \left.\frac{d^{m_j-1}}{dz^{m_j-1}} \left( \frac{\sqrt{z^2-4c}}{z-w} \frac{(z-\zeta_j)^{m_j}}{Q(z)} \right)\right|_{z=\zeta_j}
  = (m_j-1)! \frac{q_j(w)}{(\zeta_j - w)^{m_j}}
\]
with $q_j(w)$ a polynomial of order $m_j - 1$. 
% $\zeta_j \not\in I_c$ and $Q(\zeta_j) = 0 \not= Q'(\zeta_j)$. 

Note that, if $m_j = 1$, $q_j = \sqrt{\zeta_j^2-4c}/Q'(\zeta_j)$.

\begin{Remark}
If $\zeta_j \in \R$, $\zeta_j^2 \geq 4c$ because of $Q(t) > 0$ ($-2\sqrt{c} < t < 2\sqrt{c}$). 
\end{Remark}

As to the left-hand side, it is independent of $R$ whenever $R$ is large enough and defines a residue polynomial $R_Q$ of $w$ by
\[
  R_Q(w) =  \frac{1}{2\pi i} \oint_{|z| = R} \frac{1}{z-w} \frac{\sqrt{z^2 - 4c}}{Q(z)}\, dz. 
\]

Notice that 
\begin{align*}
  \frac{\sqrt{z^2-4c}}{z-w} &= z\left( 1 - \frac{2c}{z^2} - \frac{2c^2}{z^4} - \cdots\right)
                              \frac{1}{z} \left(1 + \frac{w}{z} + \frac{w^2}{z^2} + \cdots\right)\\
  &= 1 + \frac{w}{z} + \frac{w^2-2c}{z^2} + \cdots, 
\end{align*}
which is multiplied by $1/Q(z) = P(z) + p/z + O(1/z^2)$ to extract the residue $R_Q$ at $z=\infty$, i.e., the coefficient of $1/z$.

% and independent of $R$ and if $\deg B \geq 2$, it converges to $0$ as $R \to \infty$.
\begin{Example}
  If $Q$ is a polynomial of degree $d \geq 2$, $1/Q = O(1/z^2)$ and hence $R_Q = 0$.
  If $Q(z) = a + bz$, $1/Q = 1/bz + O(1/z^2)$ ($b \not= 0$) and hence $R_Q = 1/b$, whereas $R_Q = w/a$ ($b=0$, $a \not= 0$). 
  Thus for a polynomial $Q$, 
%   the residue formula at $z=\infty$ shows that, for a sufficiently large $R>0$,
\[
  R_Q(w) = 
  \begin{cases}
      0 &(\deg Q \geq 2)\\
    1/b &(Q(z) = a + bz, b \not= 0)\\
    w/a &(Q(z) = a \not= 0)
  \end{cases}. 
\]
\end{Example}

% Now set
% \[
%   R_B(w) =
%   \begin{cases}
%     0 &(\deg B \geq 2)\\
%     1/b & (B = a +bw)\\
%     w/a & (B=a).  
%   \end{cases}
% \]

\begin{Proposition}\label{residue}
  \[
    S_\mu(w) = \pi \frac{\sqrt{w^2-4c}}{Q(w)} + \pi \sum_{j=1}^l \frac{q_j(w)}{(\zeta_j - w)^{m_j}}
    - \pi R_Q(w). 
    % S_\mu(w) = \pi \frac{\sqrt{w^2-4c}}{Q(w)} + \pi \sum_{j=1}^l \frac{\sqrt{\zeta_j^2 - 4c}}{(\zeta_j - w) Q'(\zeta_j)}
    % - \pi R_Q(w). 
  \]  
\end{Proposition}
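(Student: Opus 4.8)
The plan is to establish Proposition~\ref{residue} essentially by assembling the two contour-integral identities already derived in the text and solving for $S_\mu(w)$. First I would recall that the text has shown, for a sufficiently small $\epsilon > 0$,
\[
  S_\mu(w) = \frac{i}{2} \oint_{C_\epsilon} \frac{1}{z-w} \frac{\sqrt{z^2 - 4c}}{Q(z)}\, dz,
\]
and, for a sufficiently large $R>0$ and small $r>0$, the residue theorem applied to the function $\frac{1}{z-w}\frac{\sqrt{z^2-4c}}{Q(z)}$ on the annular region between $|z|=R$ and the small circles around $w$, around the zeros $\zeta_1,\dots,\zeta_l$ of $Q$ in $\C\setminus I_c$, and around $C_\epsilon$, gives
\[
  2\pi i\, R_Q(w) = 2\pi i \frac{\sqrt{w^2 - 4c}}{Q(w)} + 2\pi i \sum_{j=1}^l \frac{q_j(w)}{(\zeta_j - w)^{m_j}} - 2i S_\mu(w).
\]

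The key algebraic step is then simply to solve this last identity for $S_\mu(w)$: moving the $-2iS_\mu(w)$ term to the left and dividing through by $2i$ yields
\[
  S_\mu(w) = \pi \frac{\sqrt{w^2 - 4c}}{Q(w)} + \pi \sum_{j=1}^l \frac{q_j(w)}{(\zeta_j - w)^{m_j}} - \pi R_Q(w),
\]
which is the claimed formula. I would then add a short remark confirming that the residue at $z = w$ of $\frac{1}{z-w}\frac{\sqrt{z^2-4c}}{Q(z)}$ is $\sqrt{w^2-4c}/Q(w)$ (using $Q(w)\neq 0$, i.e.\ $w\in\C\setminus\R$ avoids the real zeros of $Q$), and that the residues at the $\zeta_j$ are exactly the $q_j(w)/(\zeta_j-w)^{m_j}$ as computed via the displayed $(m_j-1)$-th derivative formula.

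The only genuine subtlety, and the step I expect to need the most care, is the justification that all the relevant contours can be taken simultaneously valid: one must choose $R$ large enough to enclose $w$, all the finitely many $\zeta_j$, and the shrinking contour $C_\epsilon$, while $r$ is small enough that the small circles $|z-\zeta_j|=r$ and $|z-w|=r$ are pairwise disjoint and disjoint from $C_\epsilon$ and from $|z|=R$. Since $Q$ is a ratio of real polynomials with only finitely many zeros, and $w\notin\R$ is fixed, such a choice exists; moreover the text has already observed that $R_Q(w)$ is independent of $R$ once $R$ is large enough (being the residue at $z=\infty$, extracted as the coefficient of $1/z$ in the Laurent expansion of $\frac{1}{z-w}\frac{\sqrt{z^2-4c}}{Q(z)}$), so the left-hand side is unambiguous. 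With these book-keeping points in place, the proposition is immediate, and the proof is essentially a one-line rearrangement of the residue identity already established.
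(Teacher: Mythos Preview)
Your proposal is correct and follows exactly the paper's approach: the derivation preceding the proposition already establishes both the identity $S_\mu(w) = \frac{i}{2}\oint_{C_\epsilon}\cdots$ and the residue-theorem equation relating the integral over $|z|=R$ to the residues at $w$, the $\zeta_j$, and the contour $C_\epsilon$, so the proposition is obtained by the one-line rearrangement you describe. The paper in fact gives no separate proof after stating the proposition, since all the work has been done in the preceding display.
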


\begin{Example}~
  \begin{enumerate}
    \item
  For a semicircular measure $\mu$ ($Q = 2\pi c$), we have 
  $S_\mu(w) = \rho(w)/c$.
\item
  For an arcsine measure $\mu$ ($Q(t) = 4c - t^2$),
  $S_\mu(w) = -\pi/\sqrt{w^2-4c}$. 
\end{enumerate}
\end{Example}

\bigskip
\noindent
$\text{\bf deg}\, \bm{Q} \bm{=} \bm{1}$

Now let $Q(w) = N(w-a)$ ($N \not= 0$, $a \in \R$). The condition $N(t-a)>0$ ($-2\sqrt{c} < t < 2\sqrt{c}$) is then equivalent to
$Na < 0$ and $a^2 \geq 4c$, under which we see that 
\[
S_\mu(w) = \frac{1}{N} \frac{a - \sqrt{a^2 - 4c} - w + \sqrt{w^2 - 4c}}{w-a} 
\]
and the normalization $\mu(\R) = 1$ is equivalent to 
\[
N = -a + \sqrt{a^2 - 4c}.
\]
Note that $\pm N > 0$ according to $\pm a \geq 2\sqrt{c}$, whence $Na < 0$ is satisfied automatically.

% Now let $Q(w) = a + bw$ ($b \not= 0$). The condition $a+bt>0$ ($-2\sqrt{c} < t < 2\sqrt{c}$) is then equivalent to
% $a>0$ and $(a/b)^2 > 4c$. In terms of $\zeta = -a/b \in \R^\times$, we have 
% \[
%   \frac{b}{\pi} S_\mu(w) = \frac{\zeta - \sqrt{\zeta^2 - 4c} - w + \sqrt{w^2 - 4c}}{w-\zeta} 
% \]
% and the normalization $\mu(\R) = 1$, i.e., $S(w) = -1/w + O(1/w^2)$ is equivalent to 
% \[
%   b = \pi(-\zeta + \sqrt{\zeta^2 - 4c}).
% \]
We compare the above $S_\mu$ with
\[
  S_{\alpha,1/2}(w) = \frac{F_0(w) + (1/2)\sqrt{w^2 - 4c}}{G_0(w)}
  = \frac{1}{-2\alpha} \frac{2\alpha - w + \sqrt{w^2 - 4c}}{w - \alpha - c/\alpha}
\]
in Proposition~\ref{one-shift}(iii), which suggests to put $a = \alpha + c/\alpha$.
We then have 
\[
  a - \sqrt{a^2 - 4c} =
  \begin{cases}
    2\alpha &(0<|\alpha| \leq \sqrt{c})\\
    2c/\alpha &(|\alpha| \geq \sqrt{c})
  \end{cases} 
\]
and hence 
\[
  S_\mu =
  \begin{cases}
    S_{\alpha,1/2} &(0 < |\alpha| \leq \sqrt{c})\\
    S_{c/\alpha,1/2} &(|\alpha| \geq \sqrt{c}) 
  \end{cases} 
\]
with $S_{c/\alpha,1/2}$ for $0 < |\alpha| < \sqrt{c}$ equal to the Stieltjes transform of
\[
    \left(1-\frac{\alpha^2}{c}\right)\delta_{\alpha + c/\alpha} + \frac{\alpha^2}{c} \mu
\]
and $S_{\alpha,1/2}$ for $|\alpha| > \sqrt{c}$ equal to the Stieltjes transform of
\[
  \left(1-\frac{c}{\alpha^2}\right)\delta_{\alpha + c/\alpha} + \frac{c}{\alpha^2} \mu. 
\]
Here $\delta_{\alpha + c/\alpha}$ denotes the Dirac measure at $\alpha + c/\alpha \in \R \setminus I_c^\circ$ and
$\mu$ is the probability measure for the choice $a = \alpha + c/\alpha$. 

\bigskip
% Applications to shifts of a semicircular root. 
\bigskip
\noindent
$\text{\bf deg}\, \bm{Q} \bm{=} \bm{2}$

Let $Q(w) = N(w-a)(w-b)$ ($a,b \in \C$) be a quadratic polynomial satisfying 
$Q(t) > 0$ for $t \in I_c^\circ = (-2\sqrt{c},2\sqrt{c})$. 
Then the accompanied measure $\mu(dt) = \sqrt{4c - t^2}/Q(t)$ (supported by $I_c^\circ$) is finite unless $a=b=\pm 2\sqrt{c}$
with its Stieltjes transform $S_\mu$ calculated by Proposition~\ref{residue} to be 
\[
  \frac{N}{\pi} S_\mu(w) = \frac{\tau_\mu - \sigma_\mu w + \sqrt{w^2-4c}}{(w-a)(w-b)}. 
\]
Here 
\[
    \sigma_\mu = \frac{\sqrt{b^2-4c} - \sqrt{a^2-4c}}{b-a},
    \quad
    \tau_\mu = \frac{a\sqrt{b^2-4c} - b\sqrt{a^2-4c}}{b-a}  
\]
and the normalization condition is satisfied by $N = \pi(\sigma_\mu - 1)$.

Recall that $\sqrt{w^2-4c}$ is a holomorphic function of $w \in \C \setminus I_c$ satisfying
$\overline{\sqrt{w^2 - 4c}} = \sqrt{{\overline w}^2 - 4c}$ and, for $t \in \R \setminus I_c$, 
$\pm \sqrt{t^2-4c} > 0$ according to $\pm t > 2\sqrt{c}$.

Thus, if $a \leq -2\sqrt{c}$ and $b \geq 2\sqrt{c}$, then
\[
    \sigma_\mu = \frac{\sqrt{b^2-4c} + |\sqrt{a^2-4c}|}{b-a},
    \quad
    \tau_\mu = \frac{a\sqrt{b^2-4c} + b|\sqrt{a^2-4c}|}{b-a},   
  \]
  whereas, if $a = \zeta = \overline{b} \in \C \setminus \R$, then 
\[
    \sigma_\mu = \frac{\text{Im}\, \sqrt{\zeta^2 - 4c}}{\text{Im}\,\zeta},
    \quad
    \tau_\mu = \frac{\text{Im}\, (\overline{\zeta}\sqrt{\zeta^2 - 4c})
    }{\text{Im}\, \zeta}.  
\]

Now we relate $S_{\alpha,\beta}$ to $S_\mu$. In view of the expression
\[
  S_{\alpha,\beta} = \frac{F_0/\beta + \sqrt{w^2 - 4c}}{G_0/\beta}
  = \frac{\alpha - (1-\beta)w + \beta\sqrt{w^2 - 4c}}{G_0},
%   = \frac{-\frac{\alpha}{\beta} + \frac{1+\beta}{\beta} w + \sqrt{w^2 - 4c}}{-G_0/\beta}, 
\]
the coincidence $S_\mu = S_{\alpha,\beta}$ for some $\beta \not= 1/2$ requires $\sigma = - \sigma_\mu$, which in turn implies 
$\alpha/\beta = \tau_\mu$ and hence $S_{\alpha,\beta} = S_\mu$.

\begin{Proposition}
  Given a real quadratic polynomial $(w-a)(w-b)$ ($a+b, ab \in \R$),
  there exists exactly one positive solution $(\alpha,\beta)$ which satisfies $S_{\alpha,\beta} = S_\mu$.

  For imaginary roots, there is no other positive solution.
  For real roots $a$, $b$, there is one more positive solution if $ab>0$,
  whereas there are three positive solutions other than $S_\mu$ if $ab< 0$.
  Thus there appears atomic measures for these extra solutions. 
\end{Proposition}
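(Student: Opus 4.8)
The plan is to run the whole analysis through the variable $\sigma=(1-\beta)/\beta$ and to show that $S_{\alpha,\beta}=S_\mu$ amounts to one branch-correct choice among the four values of $\sigma$ produced by Proposition~\ref{one-shift}, after which the count is read off from the Corollary that counts one-shift solutions with $\sigma>-1$. First I would record the closed form obtained by feeding $Q(w)=N(w-a)(w-b)$, $N=\pi(\sigma_\mu-1)$, into Proposition~\ref{residue},
\[
  S_\mu(w)=\frac{\tau_\mu-\sigma_\mu w+\sqrt{w^2-4c}}{(\sigma_\mu-1)(w-a)(w-b)},
\]
together with the partial-fraction relations $\tau_\mu-\sigma_\mu a=-\sqrt{a^2-4c}$ and $\tau_\mu-\sigma_\mu b=-\sqrt{b^2-4c}$; squaring these and taking their difference and their sum (here $a\not=b$ is used) yields the two identities
\[
  (\sigma_\mu^2-1)(a+b)=2\sigma_\mu\tau_\mu,\qquad \tau_\mu^2+4c=(\sigma_\mu^2-1)\,ab .
\]
I would also note that $\sigma_\mu,\tau_\mu\in\R$, that $\sigma_\mu\not=1$ since $N\not=0$, and that $\sigma_\mu>-1$: for a conjugate pair and for real roots on the same side of $I_c$, positivity of $Q$ on $I_c^\circ$ forces $N>0$ and hence $\sigma_\mu>1$, while for real roots with $ab<0$ the Lemma on real combinations of $\sigma$ gives $|\sigma_\mu|\le1$ and the explicit formula gives $\sigma_\mu>0$.

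The core step is the reduction: for $\beta\not=1/2$ one has $S_{\alpha,\beta}=S_\mu$ if and only if $\sigma=\sigma_\mu$ and $\alpha=\beta\tau_\mu$, i.e.\ $(\alpha,\beta)=\bigl(\tau_\mu/(\sigma_\mu+1),\,1/(\sigma_\mu+1)\bigr)$. Writing $S_{\alpha,\beta}=(F_0+\beta\sqrt{w^2-4c})/G_0$ with $F_0=\alpha-(1-\beta)w$ and $G_0=(1-2\beta)w^2-2\alpha(1-\beta)w+\alpha^2+4c\beta^2$, equating with the displayed $S_\mu$ and clearing denominators, the irrationality of $\sqrt{w^2-4c}$ splits the identity into a rational part and an irrational part (the coefficient of $\sqrt{w^2-4c}$): the irrational part gives $G_0=\beta(\sigma_\mu-1)(w-a)(w-b)$, comparison of leading coefficients gives $1-2\beta=\beta(\sigma_\mu-1)$, i.e.\ $\sigma=\sigma_\mu$, and the rational part then gives $F_0=\beta(\tau_\mu-\sigma_\mu w)$, i.e.\ $\alpha=\beta\tau_\mu$. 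Conversely, starting from $\sigma=\sigma_\mu$ (so $\beta=1/(\sigma_\mu+1)$, $1-2\beta=\beta(\sigma_\mu-1)$) and $\alpha=\beta\tau_\mu$, the two identities above are exactly the coefficient matchings that verify $G_0=(1-2\beta)(w-a)(w-b)$, after which the numerators agree and $S_{\alpha,\beta}=S_\mu$. The case $\beta=1/2$ is excluded separately: then $\deg G_0=1$, and matching the irrational parts of $S_{\alpha,1/2}=S_\mu$ would equate a linear polynomial with the genuine quadratic $(\sigma_\mu-1)(w-a)(w-b)$. This proves the first assertion; the solution is positive because $\beta=1/(\sigma_\mu+1)>0$.

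It remains to count. The positive pairs $(\alpha,\beta)$ for which $G_0$ is proportional to $(w-a)(w-b)$ are exactly those of Proposition~\ref{one-shift}(i), indexed by $\sigma=\pm(\sqrt{a^2-4c}\pm\sqrt{b^2-4c})/(a-b)$, and by the Corollary counting solutions with $\sigma>-1$ their number is $1$ for a conjugate pair, $2$ if $ab>0$, and $4$ if $ab<0$. Among these, the branch value $\sigma_\mu=(\sqrt{a^2-4c}-\sqrt{b^2-4c})/(a-b)$ is the one realizing $S_\mu$ by the reduction, and it always lies in the positive list since $\sigma_\mu>-1$; deleting it leaves $0$, $1$, $3$ extra positive solutions in the three cases, which is the content of the remaining assertions. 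For each extra solution, $\alpha\in\R$ and $\beta>0$ make $S_{\alpha,\beta}$ the Stieltjes transform of a probability measure (with $a_0=\alpha$, $b_0^2=2\beta c$ and $a_n=0$, $b_n^2=c$ for $n\ge1$), whose absolutely continuous part on $I_c^\circ$ is, by the Stieltjes inversion formula, a positive multiple of that of $\mu$; since the transform differs from $S_\mu$, the discrepancy is carried by point masses at $a$ and/or $b$, so these extra laws are indeed atomic.

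The step I expect to need genuine care is the sign bookkeeping behind the first paragraph: confirming that the $\sigma_\mu,\tau_\mu$ produced by the residue computation really are the quantities satisfying the two quadratic identities, keeping track of the correct branch of $\sqrt{a^2-4c}$ and $\sqrt{b^2-4c}$ (the signs differ for real roots on opposite sides of $I_c$ and for a conjugate pair), and in establishing $\sigma_\mu>-1$ uniformly, which is what makes the canonical solution positive and hence one of those the Corollary is counting. Everything else is the coefficient comparison already carried out in the $\deg Q=2$ discussion.
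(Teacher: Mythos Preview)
Your argument is correct and follows essentially the same route as the paper: the brief discussion preceding the Proposition reduces $S_{\alpha,\beta}=S_\mu$ to a single coefficient comparison (the paper writes this as ``$\sigma=-\sigma_\mu$, which in turn implies $\alpha/\beta=\tau_\mu$''), and the count of positive solutions is then read off from the Corollary in \S5.1 on how many of the four $\sigma$-values satisfy $\sigma>-1$. Your write-up is in fact more complete than the paper's, since you explicitly verify $\sigma_\mu>-1$ in all three regimes (via $N>0$ for conjugate or same-side real roots, and via the explicit positive-quotient form of $\sigma_\mu$ when $ab<0$) and you spell out why the extra solutions carry atoms; the paper leaves both of these implicit. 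One cosmetic point: your identification $\sigma=\sigma_\mu$ is the correct sign (it comes directly from matching $F_0/\beta=\alpha/\beta-\sigma w$ against $\tau_\mu-\sigma_\mu w$); the paper's ``$\sigma=-\sigma_\mu$'' appears to be a slip.
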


\bigskip
\noindent
$\text{\bf deg}\, \bm{Q} \bm{\geq} \bm{2}$

From here on assume that $Q$ is a polynomial of degree two or more.
% and factorize $Q = Q_0 Q_1Q_2$ with $Q_2 = \prod (w-\zeta_j'')$
% ($\zeta_j''$ being simple real roots of $Q$), $Q_1 = \prod (w-\zeta_j')$ ($\zeta_j'$ being non-real simple roots).
% Consequently $Q_0$ covers non-simple roots. 
Then
$S_\mu$ takes the form $(P_0(w) + \sqrt{w^2-4c})/Q$ with $P_0$ a real polynomial of degree $\deg Q - 1$.
If we change $P_0$ to another real polynomial $P$ of degree $\deg Q - 1$
so that it still gives the Stieltjes transform of a possibly signed measure, then
\[
  \frac{P(w) + \sqrt{w^2 - 4c}}{Q(w)} = S_\mu(w) + \frac{P(w) - P_0(w)}{Q(w)}
\]
with $(P-P_0)/Q$ having no poles inside $\text{Im}\, w > 0$. Thus, by factorizing $Q = Q_0Q_1$ with $Q_0$ covering all non-real roots of $Q$,
$P-P_0$ must be divided by $Q_0$ and we have 
\[
  \frac{P-P_0}{Q} = \frac{P_1}{Q_1} = \sum_{j=1}^d \frac{f_j(w)}{(q_j - w)^{m_j}}
\]
Here $q_j$ with multiplicity $m_j$ denotes all real roots of $Q$ and $f_j(w)$ is a polynomial of degree $m_j - 1$.
Since the existence of higher singularities violates the validity of the Stieltjes inversion formula, 
$(P-P_0)/Q$ is the Stieltjes transform of a signed measure exactly when $p_j(w)/(q_j-w)^{m_j} = p_j/(q_j-w)$ ($1 \leq j \leq d$),
\[
  \frac{P-P_0}{Q} = \sum_j \frac{p_j}{q_j - w} = \frac{p(w)}{\prod_{j=1}^d (q_j-w)} 
\]
with $p$ a real polynomial of degree $\leq d-1$ and
there remain $d$ parameters $p_j$ to choose as weights of atomic measures placed at $q_j$ ($1 \leq j \leq d$).
%
%
%
%
%
%
%
%   The followings are typed by H.Y.
%
%
%
%
%
%
\section{Transformations of Probablity measures}
% $t$-transfor, $U$-transform and $W$-transform.
% \subsection{Relations to The Finite Rank Perturbations}

Let $\mathcal{H}$ be a complex Hilbert space with an inner product 
$\langle \, \cdot \, , \, \cdot \, \rangle$.
For vectors $u, w, f \in \mathcal{H}$ we define the rank-one operator
$u \otimes w$ by 
$$
 (u \otimes w) f = \langle \, f \, , \, w \, \rangle u.
$$

Let $A$ be a closed, densely defined operator on $\mathcal{H}$, 
$s, t \in \mathbb {C}$ and consider the sum of two rank-one 
perturbations of the form
$$
  A_{s, t} = A - s (u \otimes w) - t (g \otimes h) 
$$
for some non-zero vectors $u, w, g, h \in \mathcal{H}$.

\medskip 

In the paper \cite{KWW}, they investigated such a kind of special 
rank-two perturbations for the case of $\{g, h \} = \{u, w \}$: 
the `anti-diagonal' and the `diagonal' which are given respectively by 
$$
  \widetilde{ A_{s, t} } = A - s (u \otimes w) - t (w \otimes u) 
  \, \mbox{ and } \, 
  \widehat{ A_{s, t} }   = A - s (u \otimes u) - t (w \otimes w).
$$

\medskip 

They have shown that the `anti-diagonal' rank-two perturbation gives 
an operator model of the transformations of probability measures 
on $\mathbb{R}$ such as the $\mathrm{t}$-transform introduced 
by Bo{\. z}ejko and Wysocza{\' n}ski in \cite{BW} and the generalized 
$\mathrm{t}$-transform by Krystek and Yoshida in \cite{KY}.

\medskip 

These transformations of probability measures are based on the
changings of the Jacobi parameters, that is, the first one or two 
terms of the Jacobi parameters are modified.  Thus the rank-two 
perturbation does work well. 

\medskip 

Another such a kind of transformations of probability measures are 
investigated in some literatures. For instance, we can find 
the $V_\alpha$-transformation in \cite{KWa} and the 
$\mathrm{u}$-transform in \cite{KWW}, the operator model of which 
is given by the `diagonal' rank-two perturbation.

\medskip

We shall treat a probability measure on $\mathbb{R}$ in the frame work of 
non-commutative probability.

Let $A = A^*$ be a closed, densely defined self-adjoint operator on 
a Hilbert space $\mathcal{H}$, $u \in \mbox{Dom} (A) \subset \mathcal{H}$ 
be a unit vector $|| u || = 1$, and 
% 
% let $Q_u (z) = \big\langle \, (z - A)^{-1} u \, , \, u \, \big\rangle$ 
% denote the Weyl function of $A$ with respect to the vector $u$.
%
consider the vector state $\varphi_u$ on $\mathcal{B} (\mathcal{H})$ 
defined by $\varphi_u (B) =\big\langle \, B u \, , \, u \, \big\rangle$ 
for $B \in \mathcal{B} (\mathcal{H})$. 

Then the spectral theorem implies that there exists a unique 
probability measure $\mu_A$ on $\mathbb{R}$ such that 
$$
  \varphi_u \big( (z - A)^{-1} \big) =
  \int_{-\infty}^{\infty} \frac{d \mu_A (x)}{z - x}  \quad 
  z \in \mathbb{C}^{+},
$$
which is called {\it the distribution of} $A$ with respect to the 
vector state $\varphi_u$. 
We should note that $\varphi_u \big( (z - A)^{-1} \big)$ is equal to 
$G_{\mu_A} (z)$ the Cauchy transform of the distribution $\mu_A$, 
which is essentially the same as the Stieltjes transform but having 
an opposite sign.

\subsection{Finite Rank Perturbations of The Jacobi Operators}
Here we will apply such a kind of finite rank perturbations to the Jacobi 
matrix models. The Jacobi tridiagonal matrices are closely related to 
the continued fraction of the Stieltjes transform of probability measure 
and to the associated orthogonal polynomials. 

\medskip 

We shall consider the probability measure with compact support on 
$\mathbb{R}$, which ensures finite moments of all orders and covers the 
distribution of a bounded self-adjoint operator.

Let $\mu$ be a such probability measure on $\mathbb{R}$. 
Then there exists the sequence of polynomials $\big\{ P_n \big\}_{n \ge 0}$, 
which is orthonormal with respect to the measure $\mu$, that is, 
$$
    \langle P_m  \, , \,  P_n \, \rangle 
  = \int_{\mathbb{R}} P_m(x) P_n(x) \, d \mu (x) = \delta_{m,n}
$$
in the Hilbert space $L^2 \big( \mathbb{R}, d \mu (x) \big)$. 

\medskip 

The orthogonal polynomials satisfy the following three terms recurrence 
relations: 
if the Stieltjes transform $S_\mu (w)$ of the probability measure $\mu$ is 
expanded into the continued fraction of the form 
$$
 S_\mu (w) = \dfrac{     1}{a_0 - w + 
             \dfrac{-b_0^2}{a_1 - w +
             \dfrac{-b_1^2}{a_2 - w + 
             \dfrac{-b_2^2}{ \; \; \ddots \; \;
             }}}},
$$
then the polynomials satisfy the three-terms recurrence relations
$$
  x P_n (x) = b_n P_{n+1} (x) + a_n P_n (x) + b_{n-1} P_{n-1} (x) 
  \;  \mbox{ for } \;  n \ge 0,
$$
where $P_0 (x) =1$ and $b_{-1} = 0$, conventionally.
Here it should be noted that the coefficients $b_n \; (n \ge 0)$ are 
positive and $a_n  \; (n \ge 0)$ are real and bounded.

\medskip 

The Jacobi operator $J$ acting on the Hilbert space 
$L^2 \big( \mathbb{R}, d \mu (x) \big)$ is an operator of 
multiplication by the variable $x$. The matrix representation of the 
operator $J$ with respect to the orthonormal basis 
$\{ e_n := P_n \}_{n \ge 0}$ is given by the tridiagonal matrix 
$$
 J = \left[ \;
   \begin{matrix}
       a_0    & b_0    & 0      & 0      & 0      & \cdots \\
       b_0    & a_1    & b_1    & 0      & 0      & \cdots \\
       0      & b_1    & a_2    & b_2    & 0      & \cdots \\
       0      & 0      & b_2    & a_3    & b_3    & \cdots \\
       0      & 0      & 0      & b_3    & a_4    & \cdots \\
       \vdots & \vdots & \vdots & \vdots & \vdots & \ddots 
   \end{matrix}
   \right] ,
$$
which is called {\it the Jacobi matrix}.  
The Stieltjes transform $S_\mu (w)$ of the probability measure $\mu$ is,
of course, given by 
$$
  S_\mu (w) = \big\langle (J - w)^{-1} e_0 \, , \,  e_0 \, \big\rangle
              = \int_{-\infty}^{\infty} \frac{d \mu (x)}{x - w} .
$$

Here we shall introduce the special finite rank perturbations, which are 
the combination of the `anti-diagonal' perturbation in \cite{KWW} with a 
diagonal shift. These perturbations have the following form:
$$
 A  \longmapsto  A - p \, (u \otimes u) 
         - q \, (A u \otimes u) - q  (u \otimes A u)
$$ 
for some $p, q \in \mathbb{R}$. 
Applying this type of perturbation to the Jacobi operator $J$ for 
the vectors $u = e_0$ and $u = e_1$, our finite rank perturbations
are defined.

\noindent
\begin{Definition}
The finite rank perturbations $\Phi_{p,q}^{(0)}$ and $\Phi_{p,q}^{(1)}$ of the 
Jacobi operator $J$ are defined by 
$$
 J  \mapsto  \Phi_{p, q}^{(0)} \big( J \big) 
     = J - p \, (e_0 \otimes e_0) 
         - q \, (J e_0 \otimes   e_0) - q \, (  e_0 \otimes J e_0)
$$ 
and 
$$
 J  \mapsto  \Phi_{p, q}^{(1)} \big( J \big) 
     = J - p \, (e_1 \otimes e_1) 
         - q \, (J e_1 \otimes   e_1) - q \, (  e_1 \otimes J e_1),
$$ 
respectively.
\end{Definition}

\begin{Remark}
Since the perturbed operator $\Phi_{p, q}^{(0)} \big( J \big)$ can be 
expanded as 
$$
 \begin{aligned}
   \Phi_{p, q}^{(0)} \big( J \big) 
    & = J - p \, (e_0 \otimes e_0) 
          - q \, \big( (b_0 e_1 + a_0 e_0)\otimes e_0 \big) 
          - q \, \big( e_0 \otimes (b_0 e_1 + a_0 e_0) \big) \\
    & = J - p \, (e_0 \otimes e_0) 
          - 2 q a_0 \, (e_0 \otimes e_0) 
            - q b_0 \, (e_0 \otimes e_1) - q b_0 \, (e_1 \otimes e_0) \\
    & = J - (2 q a_0 + p) \, (e_0 \otimes e_0) 
            - q b_0 \, (e_0 \otimes e_1) - q b_0 \, (e_1 \otimes e_0) 
 \end{aligned} 
$$
which implies that the Jacobi matrix representation of the perturbed operator 
$\Phi_{p, q}^{(0)} \big( J \big)$ is given by 
$$
\Phi_{p, q}^{(0)} \big( J \big) 
  = \left[ \;
   \begin{matrix}
    (1 - 2 q) a_0 - p & (1 - q) b_0    & 0      & 0      & 0      & \cdots \\
    (1 - q) b_0    & a_1    & b_1    & 0      & 0      & \cdots \\
       0      & b_1    & a_2    & b_2    & 0      & \cdots \\
       0      & 0      & b_2    & a_3    & b_3    & \cdots \\
       0      & 0      & 0      & b_3    & a_4    & \cdots \\
       \vdots & \vdots & \vdots & \vdots & \vdots & \ddots 
   \end{matrix}
   \right].
$$

\medskip 

On the other hand, the perturbed operator $\Phi_{p, q}^{(1)} \big( J \big)$ is 
expanded as 
$$
 \begin{aligned}
   \Phi_{p, q}^{(1)} \big( J \big) 
    & = J - p \, (e_1 \otimes e_1) 
          - q \, \big( (b_1 e_2 + a_1 e_1 + b_0 e_0)\otimes e_1 \big) \\
    & \qquad \qquad \qquad 
          - q \, \big( e_1 \otimes (b_1 e_2 + a_1 e_1 + b_0 e_0) \big) \\
    & = J - (p + 2 q a_1) \, (e_1 \otimes e_1) 
          - q b_1  \, (e_2 \otimes e_1) - q b_1  \, (e_1 \otimes e_2) \\
    & \qquad \qquad \qquad 
          - q b_0  \, (e_0 \otimes e_1) - q b_0  \, (e_1 \otimes e_0).
 \end{aligned} 
$$
Thus the Jacobi matrix representation of the perturbed operator 
$\Phi_{p, q}^{(1)} \big( J \big)$ is given by 
$$
\Phi_{p, q}^{(1)} \big( J \big) 
  = \left[ \;
   \begin{matrix}
       a_0    & (1 - q) b_0    & 0      & 0      & 0      & \cdots \\
    (1 - q) b_0    & (1 - 2 q) a_1 - p    & (1 - q) b_1    & 0      & 0      & \cdots \\
       0      & (1 - q) b_1    & a_2    & b_2    & 0      & \cdots \\
       0      & 0      & b_2    & a_3    & b_3    & \cdots \\
       0      & 0      & 0      & b_3    & a_4    & \cdots \\
       \vdots & \vdots & \vdots & \vdots & \vdots & \ddots 
   \end{matrix}
   \right].
$$
\end{Remark}

\medskip 

\subsection{Shifts of Semicircular Root by Finite Rank Perturbations}

Let $d \sigma (x) = \dfrac{1}{2 \pi c} \sqrt{4 c - x^2} \, dx$ be the 
semicircular measure of variance $c$, and 
consider the Hilbert space $L^2 \big( \mathbb{R}, d \sigma (x) \big)$. 
Since the Stieltjes transform $S_{c} (w)$ of the semicircular measure of 
variance $c$ is expanded into the continued fraction of the form
$$
   S_{c} (w) = 
            \dfrac{ 1}{ -w + 
            \dfrac{-c}{ -w + 
            \dfrac{-c}{ -w + 
            \dfrac{-c}{ \; \; \ddots \; \;
            }}}},
$$
the corresponding the Jacobi matrix is given by 
$$
  J_{c}
  = \left[ \;
   \begin{matrix}
       0      & \sqrt{c} & 0        & 0        & 0        & \cdots \\
     \sqrt{c} & 0        & \sqrt{c} & 0        & 0        & \cdots \\
       0      & \sqrt{c} & 0        & \sqrt{c} & 0        & \cdots \\
       0      & 0        & \sqrt{c} & 0        & \sqrt{c} & \cdots \\
       0      & 0        & 0        & \sqrt{c} & 0        & \cdots \\
       \vdots & \vdots & \vdots & \vdots & \vdots & \ddots 
   \end{matrix}
   \right].
$$

As it is mentioned in {Section 3} that the one-step shift 
$S_{\alpha, \beta} (w)$ and the two-step shift $S_{2} (w)$ of the 
semicircular measure are given by the liner fractional transforms
$$
   S_{\alpha, \beta} (w) = \dfrac{1}{\alpha - w + 2 \beta S_c (w)}
$$
and 
$$
   S_{2} (w) = \dfrac{1}{\gamma - w + \delta S_{\alpha, \beta}(w)},
$$
respectively. Here we will see that the one-step and the two-step
{shifts} of the semicircular measure can be obtained by applying 
our finite rank perturbations to the corresponding Jacobi operators.

\begin{Proposition}
The Jacobi matrix $J_{\alpha, \beta}$ corresponding to the one-step 
shift $S_{\alpha, \beta}$ is given by the rank-two perturbation 
of $J_c$ that 
$$
   J_{\alpha, \beta} = \Phi_{p,q}^{(0)} (J_c),
$$
where $(p, q) = \big( -\alpha, \,  1 - \sqrt{-2 \beta} \, \big)$.
\end{Proposition}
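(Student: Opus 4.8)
The plan is to verify the asserted identity directly at the level of Jacobi matrices: compute the tridiagonal matrix of each side explicitly, and compare the two entries in which they differ from $J_c$.

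First I would record the Jacobi parameters of $S_{\alpha,\beta}$. From the one-step shift
\[
  S_{\alpha,\beta}(w) = \cfrac{1}{\alpha - w + \cfrac{-2\beta c}{-w + \cfrac{-c}{-w + \ddots}}}
\]
one reads off $a_0 = \alpha$, $a_n = 0$ $(n \geq 1)$, and $b_0^2 = -2\beta c$, $b_n^2 = c$ $(n \geq 1)$, the sign normalization here being the one in which positivity of $S_{\alpha,\beta}$ amounts to $\beta < 0$, so that $b_0 = \sqrt{-2\beta c} > 0$. Hence $J_{\alpha,\beta}$ coincides with $J_c$ outside its upper-left $2 \times 2$ corner, where the $(0,0)$ entry is $\alpha$ in place of $0$ and the $(0,1) = (1,0)$ entry is $\sqrt{-2\beta c}$ in place of $\sqrt{c}$.

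Next I would invoke the Remark computing $\Phi^{(0)}_{p,q}(J)$: for a Jacobi matrix $J$ with parameters $(a_n),(b_n)$, the matrix $\Phi^{(0)}_{p,q}(J)$ agrees with $J$ except that its $(0,0)$ entry becomes $(1-2q)a_0 - p$ and its $(0,1) = (1,0)$ entry becomes $(1-q)b_0$. Taking $J = J_c$ (so $a_0 = 0$, $b_0 = \sqrt{c}$), the matrix $\Phi^{(0)}_{p,q}(J_c)$ agrees with $J_c$ except that its $(0,0)$ entry is $-p$ and its $(0,1) = (1,0)$ entry is $(1-q)\sqrt{c}$. Comparing with the previous paragraph, the equality $\Phi^{(0)}_{p,q}(J_c) = J_{\alpha,\beta}$ holds precisely when
\[
  -p = \alpha \qquad \text{and} \qquad (1-q)\sqrt{c} = \sqrt{-2\beta c},
\]
that is, $(1-q)^2 = -2\beta$; since a Jacobi matrix has strictly positive off-diagonal entries we are forced to take the branch $1 - q = \sqrt{-2\beta} > 0$, which gives $p = -\alpha$ and $q = 1 - \sqrt{-2\beta}$, as asserted.

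There is no deep obstacle here — the argument is a direct comparison of two tridiagonal matrices — so what I would treat as the main point is really the bookkeeping: (a) the consistent translation of the continued fraction of $S_{\alpha,\beta}$ into Jacobi parameters, in particular pinning down $b_0^2 = -2\beta c$ from the normalization of $S_c$ used in this section, which is what produces the positive square root $\sqrt{-2\beta}$; and (b) the choice of square-root branch for $q$, dictated by the requirement that $(1-q)\sqrt{c}$ be a genuine positive off-diagonal entry. With $q < 1$ thus fixed, $\Phi^{(0)}_{p,q}(J_c)$ is again a bona fide Jacobi matrix, and by the bijection between compactly supported probability measures and their Jacobi matrices recalled in Section~1, its distribution with respect to $e_0$ is exactly the measure whose Stieltjes transform is $S_{\alpha,\beta}$.
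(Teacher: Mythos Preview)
Your approach is correct and essentially identical to the paper's: both arguments write out the continued fraction of $S_{\alpha,\beta}$ to read off its Jacobi parameters, write out $\Phi_{p,q}^{(0)}(J_c)$ explicitly via the Remark, and match the two modified entries to obtain $p=-\alpha$ and $1-q=\sqrt{-2\beta}$. Your additional commentary on the positive branch of the square root and the resulting well-definedness of the Jacobi matrix is a nice touch the paper leaves implicit; note, however, a small internal slip in your display---with the continued fraction numerator written as $-2\beta c$ one should read $-b_0^2=-2\beta c$, i.e.\ $b_0^2=2\beta c$, whereas you state $b_0^2=-2\beta c$ (the paper's own proof displays the numerator as $+2\beta c$, and the same sign discrepancy with Section~3 is present there).
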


\begin{proof}
It is easy to find that the perturbed Jacobi matrix $\Phi_{p,q}^{(0)} (J_c)$ 
becomes
$$
\Phi_{p,q}^{(0)} (J_c)
  = \left[ \;
   \begin{matrix}
    - p  &  (1 - q) \sqrt{c} & 0        & 0        & 0        & \cdots \\
 (1 - q) \sqrt{c} & 0        & \sqrt{c} & 0        & 0        & \cdots \\
       0      & \sqrt{c} & 0        & \sqrt{c} & 0        & \cdots \\
       0      & 0        & \sqrt{c} & 0        & \sqrt{c} & \cdots \\
       0      & 0        & 0        & \sqrt{c} & 0        & \cdots \\
       \vdots & \vdots & \vdots & \vdots & \vdots & \ddots 
   \end{matrix}
   \right].
$$
Moreover $S_{\alpha, \beta} (w)$ has the continued fraction of the form 
$$
   S_{\alpha, \beta} (w) = 
            \dfrac{ 1}{ \alpha - w + 
            \dfrac{ 2 \beta c}{ -w + 
            \dfrac{-c}{ -w + 
            \dfrac{-c}{ \; \; \ddots \; \;
            }}}}.
$$
Hence we have $\alpha = -p$ and $\sqrt{-2 \beta} = 1 - q$.
\end{proof}

\medskip 

\begin{Proposition}
The Jacobi matrix $J_{2}$ corresponding to the two-step shift 
$S_{2}$ of the semicircular is given by the composition of 
the finite rank perturbations of $J_c$ that 
$$
   J_{2} = \Phi_{p_1,q_1}^{(1)} \big( \Phi_{p_0,q_0}^{(0)} (J_c) \big),
$$
where 
  $(p_0, q_0) = \Big( -\gamma, \,  1 - \sqrt{\sfrac{\delta}{2 \beta c}} \Big)$ 
and 
  $(p_1, q_1) = \big( -\alpha, \,  1 - \sqrt{-2 \beta} \big)$.
\end{Proposition}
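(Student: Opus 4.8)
The plan is to reduce the claimed identity $J_2 = \Phi^{(1)}_{p_1,q_1}\bigl(\Phi^{(0)}_{p_0,q_0}(J_c)\bigr)$ to the level of continued fractions, exactly as in the proof of the previous proposition. First I would recall from Section~3 that the two-step shift is obtained by applying the shift operation twice to $2\rho = 2cS_c$, so that $S_2(w)$ has the continued fraction
\[
  S_2(w) = \cfrac{1}{\gamma - w + \cfrac{\delta c'}{\alpha - w + \cfrac{2\beta c}{-w + \cfrac{-c}{-w + \ddots}}}},
\]
where the coefficient $\delta c'$ appearing in the second level is read off from $S_{\alpha,\beta}$ having leading behaviour $2cS_c$-type at its own top level; concretely, since $S_{\alpha,\beta}(w) = 1/(\alpha - w + 2\beta c S_c(w))$ behaves like $-1/w$ at infinity and the numerator of $-\delta S_{\alpha,\beta}$ in the next shift contributes an off-diagonal Jacobi weight, the relevant product is $\delta\cdot(2\beta c)$ up to the rescaling fixed in Example~1.1. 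This identifies the Jacobi matrix $J_2$ explicitly as tridiagonal with diagonal $(\gamma,\alpha,0,0,\dots)$ — wait, more precisely with entries determined by $\alpha,\beta,\gamma,\delta$ — and off-diagonal entries $\sqrt{-\delta\cdot 2\beta c/(2\beta c)}\cdot(\ldots)$; I would pin these down by direct comparison.

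Next I would compute the right-hand side by composing the two perturbation formulas from the Remark. Applying $\Phi^{(0)}_{p_0,q_0}$ to $J_c$ changes the $(0,0)$ entry from $0$ to $-p_0$ and the $(0,1)=(1,0)$ entry from $\sqrt c$ to $(1-q_0)\sqrt c$, leaving everything else untouched. Then applying $\Phi^{(1)}_{p_1,q_1}$ to that matrix changes the $(1,1)$ entry from $0$ to $(1-2q_1)\cdot 0 - p_1 = -p_1$, multiplies the $(0,1)$ entry by $(1-q_1)$ and the $(1,2)$ entry by $(1-q_1)$, according to the second matrix display in the Remark (with $a_1 = 0$, $b_0$ replaced by the already-perturbed $(1-q_0)\sqrt c$, $b_1 = \sqrt c$). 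So the composed matrix has diagonal $(-p_0,\,-p_1,\,0,\,0,\dots)$, first off-diagonal $(1-q_1)(1-q_0)\sqrt c$, second off-diagonal $(1-q_1)\sqrt c$, and all further entries equal to $\sqrt c$.

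Then I would match coefficients. The diagonal forces $-p_0 = \gamma$ and $-p_1 = \alpha$, giving $p_0 = -\gamma$, $p_1 = -\alpha$ as claimed. The second off-diagonal squared must equal the Jacobi weight $b_1^2$ at that level of the continued fraction for $S_2$: from the display above that weight is $2\beta c$ up to sign conventions, so $(1-q_1)^2 c = -2\beta c$, i.e.\ $1 - q_1 = \sqrt{-2\beta}$, matching $(p_1,q_1) = (-\alpha,\,1-\sqrt{-2\beta})$ and consistent with the one-step proposition. Finally the first off-diagonal squared must equal the top-level weight of $S_2$, which is $-\delta$ times the appropriate normalization; since that off-diagonal is $(1-q_1)(1-q_0)\sqrt c$ and $(1-q_1)^2 = -2\beta$, we get $(1-q_0)^2\cdot(-2\beta)c = -\delta$, hence $(1-q_0)^2 = \delta/(2\beta c)$, i.e.\ $1 - q_0 = \sqrt{\delta/(2\beta c)}$, which is exactly $(p_0,q_0) = (-\gamma,\,1-\sqrt{\delta/(2\beta c)})$.

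The main obstacle I anticipate is bookkeeping the sign and normalization conventions linking the ``shift'' operation $S \mapsto 1/(\alpha - w - \beta S)$ (with its factor of $2$ absorbed into $2\rho = 2cS_c$) to the Jacobi-weight-squared $b_n^2$ appearing in the continued fraction — in particular making sure the weight contributed at the junction between the $S_{\alpha,\beta}$-block and the extra shift is genuinely $\delta\cdot(2\beta c)$ rescaled correctly, rather than $\delta$ or $2\beta c\delta$ with a stray constant. Once that normalization is fixed (it is forced by consistency with the already-proved one-step proposition and by the requirement that the bottom of the continued fraction be the unperturbed $S_c$), the rest is the routine matrix composition above.
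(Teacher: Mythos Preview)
Your approach is correct and essentially identical to the paper's: compute the composed perturbation matrix explicitly (diagonal $(-p_0,-p_1,0,\dots)$, first two off-diagonals $(1-q_0)(1-q_1)\sqrt c$ and $(1-q_1)\sqrt c$) and match against the Jacobi parameters read off from the continued fraction of $S_2$. The ``normalization obstacle'' you anticipate, and the hedging with ``$\delta c'$'' in your first paragraph, are unnecessary --- the continued fraction for $S_2$ follows immediately by substituting the already-known expansion of $S_{\alpha,\beta}$ into $S_2 = 1/(\gamma - w - \delta S_{\alpha,\beta})$, giving $a_0=\gamma$, $a_1=\alpha$, $b_0^2$ and $b_1^2$ directly, with no rescaling subtlety.
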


\begin{proof}
It can be found that $S_{2} (w)$ has the continued fraction of the form 
$$
   S_{2} (w) = 
            \dfrac{ 1}{ \gamma - w + 
            \dfrac{ \delta }{\alpha  -w + 
            \dfrac{2 \beta c}{ -w + 
            \dfrac{-c}{ \; \; \ddots \; \;
            }}}}.
$$
On the other hand, the perturbed Jacobi matrix 
$\Phi_{p_1,q_1}^{(1)} \big( \Phi_{p_0,q_0}^{(0)} (J_c) \big)$ becomes 
$$
\begin{aligned}
& \Phi_{p_1,q_1}^{(1)} \big( \Phi_{p_0,q_0}^{(0)} (J_c) \big) = \\
& \left[ \;
   \begin{matrix}
      - p_0  &  (1 - q_0)(1 - q_1) \sqrt{c} & 0        & 0        & 0        & \cdots \\
 (1 - q_0)(1 - q_1) \sqrt{c} & - p_1      & (1 - q_1)\sqrt{c} & 0        & 0        & \cdots \\
       0      & (1 - q_1)\sqrt{c} & 0        & \sqrt{c} & 0        & \cdots \\
       0      & 0        & \sqrt{c} & 0        & \sqrt{c} & \cdots \\
       0      & 0        & 0        & \sqrt{c} & 0        & \cdots \\
       \vdots & \vdots & \vdots & \vdots & \vdots & \ddots 
   \end{matrix}
   \right].
\end{aligned}
$$
Hence it follows that 
$\gamma = -p_0$, $\sqrt{-\delta} = (1 - q_0)(1 - q_1) \sqrt{c}$,
              $\alpha = -p_1$, and $\sqrt{-2 \beta} = 1 - q_1$, 
which implies 
$$
   p_0 = - \gamma, \; 
   q_0 = 1 - \sqrt{\sfrac{\delta}{2 \beta c}}, \; 
   p_1 = \alpha, \; 
   q_1 = 1 - \sqrt{-2 \beta}.
$$
\end{proof}

\begin{Remark}
If we define the $k$th level perturbation $\Phi_{p,q}^{(k)}$ of the Jacobi 
operator $J$ by 
$$
 J  \mapsto  \Phi_{p, q}^{(k)} \big( J \big) 
     = J - p \, (e_k \otimes e_k) 
         - q \, (J e_k \otimes e_k) - q \, (  e_k \otimes J e_k) \quad 
  (k \ge 0)
$$ 
then the $n$-shift of semicircular measure can be obtained by applying 
the finite rank perturbation which is given by the 
composition of $\Phi_{p,q}^{(k)}$ ($k=0,1, \ldots, n-1$), that is 
$$
      \Phi_{p_{n-1},q_{n-1}}^{(n-1)} \big( 
      \Phi_{p_{n-2},q_{n-2}}^{(n-2)} \big( \cdots 
      \Phi_{p_{1},q_{1}}^{(1)} \big(
      \Phi_{p_{0},q_{0}}^{(0)} \big(J_c \big)\big) \cdots \big)\big)
$$
for certain parameters $\big\{ ( p_k, q_k ) \big\}_{k = 0}^{n-1}$.
\end{Remark}

\bigskip

\end{document}